\newtheorem{theorem}{Theorem}[section]
\newtheorem{lemma}[theorem]{Lemma}
\newtheorem{proposition}[theorem]{Proposition}
\newtheorem{remark}[theorem]{Remark}
\newtheorem{definition}[theorem]{Definition}
\newcommand{\ncom}{\newcommand}
\ncom{\rar}{\rightarrow}
\ncom{\lrar}{\longrightarrow}
\ncom{\ov}{\overline}
\ncom{\m}{\mbox}
\ncom{\sta}{\stackrel}
\renewcommand{\sp}{{\rm sp}}
\ncom{\comx}{{\mathbb C}}
\ncom{\Z}{{\mathbb Z}}
\ncom{\Q}{{\mathbb Q}}
\ncom{\R}{{\mathbb R}}
\ncom{\G}{{\mathbb G}}
\ncom{\al}{\alpha}
\ncom{\p}{{\mathbb P}}
\ncom{\E}{{\mathbb E}}
\ncom{\N}{{\mathbb N}}
\ncom{\K}{{\mathbb K}}
\ncom{\Le}{{\mathbb L}}
\ncom{\A}{{\mathbb A}}
\ncom{\B}{{\mathbb B}}
\ncom{\F}{{\mathbb F}}
\ncom{\C}{{\mathbb C}}
\ncom{\f}{\frac}
\ncom{\cA}{{\mathcal A}}
\ncom{\cX}{{\mathcal X}}
\ncom{\cO}{{\mathcal O}}
\ncom{\cW}{{\mathcal W}}
\ncom{\cL}{{\mathcal L}}
\ncom{\cP}{{\mathcal P}}
\ncom{\cH}{{\mathcal H}}
\ncom{\cS}{{\mathcal S}}
\ncom{\cM}{{\mathcal M}}
\ncom{\cC}{{\mathcal C}}
\ncom{\cT}{{\mathcal T}}
\ncom{\cF}{{\mathcal F}}
\ncom{\cN}{{\mathcal N}}
\ncom{\cJ}{{\mathcal J}}
\ncom{\cV}{{\mathcal V}}
\ncom{\cZ}{{\mathcal Z}}
\ncom{\cU}{{\mathcal U}}
\ncom{\cSU}{{\mathcal S \mathcal U}}
\ncom{\cG}{{\mathcal G}}
\ncom{\cQ}{{\mathcal Q}}
\ncom{\cR}{{\mathcal R}}
\ncom{\cE}{{\mathcal E}}
\ncom{\cY}{{\mathcal Y}}
\ncom{\cD}{{\mathcal D}}
\ncom{\cK}{{\mathcal K}}
\begin{document}
\baselineskip=16pt

\title{Degeneration of the modified diagonal cycle}

\author[J. N. Iyer]{J. N. Iyer}
\author[S. M\"uller--Stach]{S. M\"uller--Stach}

\address{The Institute of Mathematical Sciences, CIT
Campus, Taramani, Chennai 600113, India}
\email{jniyer@imsc.res.in}

\address{Mathematisches Institut der Johannes Gutenberg Universit\"at
Mainz, Staudingerweg 9, 55099 Mainz, Germany}
\email{mueller-stach@uni-mainz.de}


\footnotetext{Mathematics Classification Number: 14C25, 14D05, 14D20,
 14D21}
\footnotetext{Keywords: Modified diagonal cycle, Degeneration, higher Chow cycle.}

\begin{abstract}
In this note, we revisit the modified diagonal cycle $\Delta_e \in CH_1(C\times C\times C)$ 
of Gross and Schoen. We look at degenerations of this cycle, induced by a degeneration of the curve $C$, and explain 
how the specialization map with respect to the central fiber produces a higher Chow cycle. 
When $C$ is non-hyperelliptic of genus three, and degenerates to a nodal curve, 
the degeneration of the cycle is an indecomposable higher Chow cycle 
$\Delta_{e,1}\in CH^2(C'\times C',1)\otimes \Q$, where $C'$ is hyperelliptic of genus two. 
\end{abstract}
\maketitle


\setcounter{tocdepth}{1}
\tableofcontents
 
\section{Introduction}
The modified diagonal cycle
$$
\Delta_e := \Delta_{123} -\Delta_{12}-\Delta_{13}-\Delta_{23} + \Delta_{1}+ \Delta_2 + \Delta_3 \in CH_1(C\times C\times C)
$$ 
is a one-dimensional cycle on a triple product of a smooth connected projective curve $C$ over a field $k$ which is an alternating sum of diagonal type cycles. 
We assume $k=\C$. It was shown in \cite{GrossSchoen}, that $\Delta_e$ gives a non-trivial element in the Griffiths group of one cycles modulo algebraic equivalence, 
if the curve $C$ is non-hyperelliptic.

We would like to understand the variation of the modified diagonal cycle, when a genus three non-hyperelliptic  curve degenerates to a single nodal curve. 
The philosophy of S. Bloch \cite{Bloch} says that the degeneration is a higher Chow cycle.
This type of degeneration was first looked at by A. Collino \cite{Collino} in the case of a Jacobian of a smooth non-hyperelliptic curve. 
He studied the variation of Ceresa's cycle, when the curve degenerates to a nodal curve. If the normalization is a hyperelliptic curve $C$, 
he associated a higher Chow cycle in $CH^2(J(C),1)$, and proved it to be indecomposable.

Here, we show that the outcome is very close to Collino's computations. 
Suppose $\cC\rar B$ is family of projective curves of genus three over $B:={\rm Spec}(R)$. Here $R$ is a discrete valuation ring, and the generic fibre is smooth, 
the special fibre is a single nodal curve $C'_0$. Assume there is a section $\tilde{e}$ for this family and $C'\rar C'_0$ is the normalization.
Then by \cite[p.664]{GrossSchoen}, there is a good family of triple products $\pi:\cY\rar B$ together with the modified diagonal cycle $ \Delta_{e}$ on $\cY$.
We show the following.

\begin{theorem} \label{maintheorem}
The specialization of the cycle $\Delta_e$ on the special fibre of $\pi$ corresponds to a higher Chow cycle 
$\Delta_{e,1}$ in $CH^2(C'\times C',1)\otimes \Q$. It is indecomposable, if $C'$ is generic.
\end{theorem}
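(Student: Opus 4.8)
The plan is to compute the specialization of $\Delta_e$ explicitly on a good semistable model and then to detect its regulator. First I would fix the good family $\pi:\cY\rar B$ of Gross and Schoen, so that the special fibre $\cY_0$ is a reduced normal crossings variety obtained from the triple product $C'_0\times C'_0\times C'_0$ of the nodal curve by a sequence of blow-ups centred along the loci where two or three of the factors meet the node. Writing $p,q\in C'$ for the two points lying over the node of $C'_0$, the relevant new components of $\cY_0$ are $\p^1$-bundles over products of copies of $C'$; in particular a distinguished exceptional component dominates $C'\times C'$. The relative modified diagonal cycle restricts to a $1$-cycle $\Delta_{e,0}$ on $\cY_0$, and the first task is to trace the proper transforms of $\Delta_{123}$, the $\Delta_{ij}$, and the $\Delta_i$ through the blow-ups and record the alternating sum.

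The key point is that, because $\Delta_e$ is homologically trivial on the generic fibre and because of the section $\tilde e$, the cancellations in the alternating sum force $\Delta_{e,0}$ to be supported on the exceptional $\p^1$-bundle over $C'\times C'$ and to be rationally trivial there. Following Bloch's philosophy \cite{Bloch} and Collino's realisation of it \cite{Collino}, a rational equivalence to zero on this $\p^1$-bundle supplies, on passing to the base $C'\times C'$, a finite collection of curves $Z_i\subset C'\times C'$ together with functions $f_i\in k(Z_i)^*$ satisfying $\sum_i \m{div}_{Z_i}(f_i)=0$; the resulting datum is the higher Chow cycle $\Delta_{e,1}\in CH^2(C'\times C',1)\otimes\Q$. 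Concretely the curves $Z_i$ are the graphs cut out by the node-points $p,q$ together with the diagonal, and the functions $f_i$ are the $\G_m$-coordinates on the exceptional $\p^1$'s, in exact analogy with the function produced by Collino on the generalised Jacobian. The localization exact sequence for higher Chow groups, combined with the homological triviality of the generic cycle, guarantees that the class of $\Delta_{e,1}$ is well defined and is the specialization of $\Delta_e$.

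It remains to prove indecomposability for generic $C'$. A class in $CH^2(C'\times C',1)\otimes\Q$ is decomposable if it lies in the image of $\m{Pic}(C'\times C')\otimes\C^*$; such classes have regulator landing in the sublattice generated by $\m{NS}(C'\times C')$, and are therefore annihilated by pairing with the holomorphic two-forms $H^0(\Omega^2_{C'\times C'})=H^{1,0}(C')\otimes H^{1,0}(C')$. To separate $\Delta_{e,1}$ from the decomposables I would compute its Beilinson regulator into Deligne cohomology and extract the $H^{2,0}$-component, an explicit transcendental period of the form $\int_{Z_i}\log|f_i|\,\omega$ for $\omega$ a holomorphic two-form. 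Indecomposability then amounts to showing that this period does not lie in the period lattice of $\omega$, and I would establish the required non-vanishing by a variational argument: computing the derivative of the regulator as $C'$ moves in the hyperelliptic locus of genus two and showing it is non-zero, so that vanishing modulo the lattice can occur at most on a proper analytic subset.

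The main obstacle I anticipate is this last step. The bookkeeping of the blow-ups and the identification of the pairs $(Z_i,f_i)$, while delicate, is essentially the Gross--Schoen construction transported to the nodal situation and parallels \cite{Collino}. The genuinely hard part is the non-vanishing of the regulator modulo decomposables: one must compute a transcendental invariant precisely enough to compare it with the full decomposable lattice, and control its behaviour in families. Collino's indecomposability argument for the Ceresa cycle on the Jacobian provides the template, and the crux is to verify that his computation adapts to the surface $C'\times C'$ with the specific curves and functions extracted above.
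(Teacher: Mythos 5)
Your route is genuinely different from the paper's, and the difference matters. The paper never extracts the pairs $(Z_i,f_i)$ on $C'\times C'$ directly from the semistable model. Instead it (i) observes that the specialization is supported on the components $Y_1=C'^3$ and $Y_2,Y_3,Y_5\cong C'\times C'\times\p^1$ of the normalized special fibre, and kills the $Y_1$-part by quoting Gross--Schoen's result that the modified diagonal cycle of a \emph{hyperelliptic} curve based at a \emph{Weierstrass} point is torsion in $CH_1$ --- this is the mechanism behind the cancellation you attribute vaguely to ``the section $\tilde e$,'' and it is why the paper insists that $\tilde e$ specializes to a Weierstrass point and that the two preimages of the node are Weierstrass points; (ii) pushes forward to the compactified generalized Jacobian $J(C')\times\p^1$, where Colombo--van Geemen's identity $f_*\Delta_e=3(X_e-X_e^-)$ identifies the specialization with a multiple of Collino's cycle up to algebraic equivalence; (iii) uses Levine's relative $K_0$-groups with Adams grading to convert ``a codimension-two cycle on $X\times\p^1$ vanishing on $X\times\{0,\infty\}$'' into an honest class in $CH^2(X,1)_\Q$ and to descend from $J(C')$ back to $C'\times C'$ via the finite map $f$. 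Your plan replaces (iii) by the informal assertion that a rational equivalence on the exceptional locus ``supplies'' the data $(Z_i,f_i)$; that step is exactly where some version of the relative $K$-theory (or an explicit Collino-style construction of the functions) is needed, and as written it is not yet a proof that the class is well defined.

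The more serious gap is indecomposability. You propose to prove it by a from-scratch computation of the $H^{2,0}$-component of the regulator and a variational non-vanishing argument, and you correctly identify this as the hard part --- but you then leave it as ``the crux is to verify that Collino's computation adapts,'' which is precisely the content that a proof must supply. The paper does not perform any new regulator computation: it derives indecomposability formally from the fact that $g_*\Delta_{e,1}$ is, up to algebraic equivalence and a nonzero multiple, Collino's cycle on $J(C')$, whose regulator indecomposability for generic $C'$ is his theorem in \cite{Collino}; since decomposability is preserved under the pushforward $g_*$ (and $g^*g_*$ is a multiple of the identity on the relevant classes), indecomposability of $\Delta_{e,1}$ follows. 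If you want to keep your direct approach, you must either carry out the period computation and the variational argument in full on $C'\times C'$, or insert the reduction to Collino's theorem via the Jacobian; without one of these, the second assertion of the theorem is not established.
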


See \S \ref{degcycle}, Theorem \ref{degener}. Note that a higher Chow cycle on $CH^2(C'\times C',1)\otimes \Q$ is 
indecomposable, if it is not in the image of the natural map
$$
{\rm Pic}(C'\times C')_\Q \otimes \C^* \longrightarrow CH^2(C'\times C',1)_\Q.
$$
The method and the proof can be extended to a further degeneration of the family $\cC\rar B$, where the normalization of the special fibre is an elliptic curve $E$, 
to obtain a higher Chow cycle $\Delta_{e,2}$ in $CH^2(E,2)\otimes \Q$.
This is a multiple of the Collino's degenerated cycle, up to algebraic equivalence. 
See Proposition \ref{elliptic}. We even further degenerate $\Delta_{e,2}$ to a cycle $\Delta_{e,3}$ in $CH^2({\rm Spec}(\C),3)\otimes \Q$.
This is related to Collino's work in \cite[Remark 7.13]{Collino}, and made precise in \cite[IV D]{GGKerr}, by Green-Griffiths-Kerr, via limit Abel-Jacobi maps. 

The technical tools to make the degeneration precise in the cycle constructions, are provided by specialization maps (see Proposition \ref{specialization}), and Levine's relative 
$K_0$ groups (see \cite{Levine2}, \S \ref{LevineK}). We recall them and their relation to higher Chow groups in \S \ref{LevineK}.

{\Small Acknowledgements: 
The first author thanks D. Ramakrishnan for asking the question of degeneration of the modified diagonal cycle, in August 2011 at IMSc. We also thank him  
for useful communications on the subject. We thank R. Sreekantan for pointing out an inaccuracy in a previous version.
This work was partly done during the first author's visits to Mainz in Dec 2011 and to FU Berlin and Mainz in Dec 2013, 
and is supported by SFB/TRR 45 of Deutsche Forschungsgemeinschaft.}

\section{Preliminaries}

We assume that the base field is $k=\comx$. In this section, we collect some facts and terminologies that we will need in the proofs.

Suppose $X$ is a smooth projective variety of dimension $d$ over $\comx$. We denote the rational higher Chow groups $CH^r(X,s)_\Q:= CH^r(X,s)\otimes \Q$. When $s=0$, 
$CH^r(X,0)=CH^r(X)$ is the usual Chow group of $X$.
We refer to Bloch's definition \cite{Bloch} of higher Chow groups and Levine's cubical version  \cite{Levine}.

\subsection{Indecomposable cycles}

There is a product map:
$$
CH^{r-m}(X,s-m) \otimes CH^1(X,1)^{\otimes m} \sta{\epsilon}{\rar} CH^r(X,s).
$$
The cokernel $\f{CH^r(X,s)}{{\rm Image}(\epsilon)}$  is the group of \textit{indecomposable} cycles.

Let $K_j$ denote Quillen's $K$-theory of coherent sheaves and $\cK_{j,X}$ denote the corresponding Zariski sheaf on $X$. Then we have the isomorphism:
$$
CH^r(X,s)\simeq H^{r-s}(X, \cK_{r,X}).
$$

An element of $CH^r(X,1)$ is written as a finite sum $\sum_i W_i \otimes h_i$, where $W_i\subset X$ are irreducible subvarieties of codimension $r-1$ and the rational 
functions $h_i\in \comx(W_i)^*$ satisfy $\sum_i {\rm div}(h_i)=0$ as a cycle on $X$.

\subsection{Regulator}

For a subring $A\subset \R$, denote $A(r):= (2\pi i)^rA \subset \comx$, and the Deligne cohomology $H^j_\cD(X,A(r))$ is the hypercohomology of the complex:
$$
A(r)_\cD: A(r) \rar \cO_X \rar \Omega^1_X \rar...\rar \Omega^{r-1}_X.
$$
There is a short exact sequence:
$$
0\rar \f{H^{2r-2}(X,\comx)}{F^rH^{2r-2}(X,\comx) + F^rH^{2r-1}(X,\comx)} \rar H^{2r-1}_\cD (X,A(r)) \rar F^r H^{2r-1}(X, A(r)) \rar 0.
$$
%
There is a cycle class map, resp. regulator 
$$
c_{r,s}: CH^r(X,s) \rar H^{2r-s}_\cD(X, \Z(r)).
$$


\subsection{Levine's $K$-groups with supports}\label{LevineK}

In this subsection, we recall Levine's $K$-groups \cite[\S 8, p.48-51]{Levine2} with supports, and their relation to higher Chow groups.

Let $X$ be a quasi-projective scheme over a field $k$ and $K(X)$ denote the Quillen $K$-theory space (or spectrum) of $X$.
The $K$-groups $K_j(X)$ are the $j$-th homotopy groups of $K(X)$.

1) There is a natural $\lambda$-ring structure on $K_*(X)$.

2) If $U\subset X$ is an open subset, then we have the $K$-theory with supports $K^W_X$ in $W:=X-U$, defined as the homotopy fiber of the natural map:
$$
K(X)\rar K(U).
$$

3) There is a long exact sequence:
$$
K^W_p(X)\rar K_p(X) \rar K_p(X-W) \rar K_{p-1}^W(X) \rar ...
$$
associated to the fiber sequence 
$$
K^W(X) \rar K(X) \rar K(U).
$$
4) If $Y_1,Y_2,...,Y_n \subset X$ are closed subschemes, the $n$-cube of spaces is denoted $$K(X,Y_1,Y_2,...,Y_n)_*$$ with
$$
K(X;Y_1,...Y_n)_I:= K(\cap_{i\in I}Y_i), \, I\subset \{1,...,n\}.
$$
The relative $K$-theory space $K(X; Y_1,...,Y_n)$ is defined as:
$$
K(X; Y_1,...,Y_n):= \m{holim}_{[0,1]^n} (K(X; Y_1,...,Y_n)_*).
$$
Here holim denotes the homotopy limit.

5) If $U\subset X$ is an open subset and $W=X-U$, the relative $K$-theory space with supports, $K^W(X; Y_1,...,Y_n)$ is defined as the homotopy fiber of 
$$
K(X;Y_1,...,Y_n) \rar K(U; Y_1\cap U,...,Y_n\cap U).
$$
6) There is a natural $\lambda$-operation on the relative $K$-groups with support $K^W_*(X,Y_1,...,Y_n)$ which satisfy the special $\lambda$-identities. 
The resulting Adams operations are denoted $\psi^k$.

7)  Recall the isomorphism, when $X$ is smooth and quasi-projective over a field:
$$
 CH^q(X,p)_\Q \simeq K_p(X)^{(q)}_\Q.
 $$
The RHS is the $q$-graded piece for the Adams operation $\psi^k$, on the $\gamma$-filtration.
In fact Levine obtains the above isomorphism, by inverting $(d+p-1)!$, where $d:=\dim(X)$.
 
8) Let $\Box^n$ be the affine space $\A^n_k$, let $\partial\Box^n$ be the collection of divisors 
$$
D^\epsilon_i\,:\, t_i=\epsilon,\, i=1,...,n; \epsilon =0,1,
$$
and let $\partial_0\Box^n= \partial\Box^n-D^0_n$.

9) Let $K_r^{[q]}(X\times \Box^p; X\times \partial\Box^p)$ be the direct limit
$$
K_r^{[q]}(X \times \Box^p; X\times \partial\Box^p):= \lim \limits_\rar K^W_r(X\times \Box^p; X\times \partial\Box^p)
$$
over closed subsets $W$ of $X\times \Box^p$ of codimension $q$, such that each component of $W$ intersects each face of $X\times \Box^p$ in codimension $q$. Similarly, 
$K_r^{[q]}(X\times \Box^p; X\times \partial_0\Box^p)$ be the direct limit as above with $\partial$ replaced by $\partial_0$.

10) There are canonical isomorphisms:
$$
K_0^{[q]}(X\times \Box^p; X\times \partial\Box^p)^{(q)}_\Q \,\simeq \, Z_p(\cZ^q(X,*)_{\Q})
$$
$$
K_0^{[q]}(X\times \Box^{p+1}; X\times \partial_0\Box^{p+1})^{(q)}_\Q \,\simeq \, Z_p(\cZ^q(X,p+1)_{\Q}).
$$
Here $\cZ^q$ denotes the Levine's cubical complex, and $Z_p$ denotes the (homological) $p$-dimensional cycles. 
 
11) The natural map 
$$
K_0^{[q]}(X\times \Box^p; X\times \partial\Box^p)^{(q)}_\Q \rar K_0(X\times \Box^p; X\times \partial\Box^p)^{(q)}_\Q 
$$ 
combined with 10), gives the map:
$$
Z_p(\cZ^q(X,*)_{\Q}) \rar K_0(X\times \Box^p; X\times \partial\Box^p)^{(q)}_\Q.
$$ 
 
12) The above map descends to give the  isomorphism:
$$
cl^{q,p}: CH^q(X, p)_{\Q} \sta{\simeq}{\rar} K_0(X\times \Box^p; X\times \partial \Box^p)^{(q)}_\Q \simeq K_p(X)^{(q)}_\Q.
$$
In fact with denominators $\f{1}{(d+p-1)!}$.

\section{Modified diagonal cycle on a triple product of a curve}

Suppose $X$ is a smooth projective connected curve of genus $g$ defined over the complex numbers.
Consider the triple product $Y= X\times X\times X$. Gross and Schoen \cite{GrossSchoen} defined the modified diagonal cycle as follows:
fix a closed point $e\in X$. Define the following subvarieties of codimension $2$ on $Y$.
\begin{eqnarray*}
 \Delta_{123} & := & \{x,x,x): x\in X\}\\
 \Delta_{12} & := & \{(x,x,e): x\in X\} \\
 \Delta_{13} & := & \{(x,e,x): x\in X\} \\
 \Delta_{23} & := & \{(e,x,x): x\in X\}\\
 \Delta_1 & := & \{(x,e,e): x\in X\} \\
 \Delta_2 & := & \{(e,x,e): x\in X \} \\
 \Delta_3 & := & \{(e,e,x): x\in X\}
 \end{eqnarray*}
 
The modified diagonal cycle is defined as follows:
$$
\Delta_e := \Delta_{123} -\Delta_{12}-\Delta_{13}-\Delta_{23} + \Delta_{1}+ \Delta_2 + \Delta_3.
$$

It was shown in \cite[Proposition 3.1,p.654]{GrossSchoen} that the cycle $\Delta_e$ is homologous to zero. Furthermore, if the genus $g(X)=0$, 
then $\Delta_e$ is rationally equivalent to zero and when the curve $X$ is hyperelliptic and the point $e$ is a Weierstrass point then $\Delta_e$ is 
a torsion element in $CH_1(Y)$ of order $6$ \cite[Proposition 4.8,p.658]{GrossSchoen}.

\subsection{A projector $P_e$ on the product variety}

Fix a closed point $e\in X$.  A projector $P_e$ is defined in \cite[p.652]{GrossSchoen} which acts on the Chow group of $Y=X \times X\times X$.
For any ordered subset $T\subset \{1,2,3\}$, let $T'$ be the complementary subset $\{1,2,3\}-T$, and $|T|$ denote the cardinality of $T$. Write
$p_T:X\times X\times X\rar X^{|T|}$ for the usual projection and let $q_T:X^{|T|}\rar X\times X\times X$ be the inclusion, with $e$ inserted at the missing coordinates.
Let $P_T$ be the graph of the morphism $q_T\circ p_T:Y\rar Y$, viewed as a cycle of codimension $3$ on $Y\times Y$.  Define
$$
P_e= \sum_T (-1)^{|T'|} P_T\,\in\, Z^3(Y\times Y).
$$
In other words,
$$
P_e= P_{123}-P_{12}-P_{13}-P_{23} + P_1 +P_2 +P_3
$$

Then we have 
\begin{lemma}\label{annihilate}
The projector $(P_e)_*$ annihilates the cohomology groups $H^6(Y),H^5(Y),H^4(Y)$ and maps $H^3(Y)$ onto the K\"unneth summand $H^1(X)\otimes H^1(X)\otimes H^1(X)$.
\end{lemma}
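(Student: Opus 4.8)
The plan is to compute the endomorphism $(P_e)_*$ of $H^*(Y)$ directly on the Künneth decomposition $H^*(Y)=\bigoplus_{i,j,k}H^i(X)\otimes H^j(X)\otimes H^k(X)$, by first identifying each $(P_T)_*$ and then assembling the signed sum. Since $P_T$ is by definition the graph of the morphism $f_T:=q_T\circ p_T\colon Y\rar Y$, the induced action on cohomology is the Gysin pushforward $(P_T)_*=(f_T)_*$; indeed, writing $\gamma=(\mathrm{id},f_T)\colon Y\hookrightarrow Y\times Y$ for the graph embedding and $\mathrm{pr}_1,\mathrm{pr}_2$ for the two projections of $Y\times Y$, the projection formula gives $(\mathrm{pr}_2)_*\big(\mathrm{pr}_1^*(\alpha)\cup\gamma_*1\big)=(\mathrm{pr}_2\circ\gamma)_*\big((\mathrm{pr}_1\circ\gamma)^*\alpha\big)=(f_T)_*\alpha$. (This pushforward convention is also the one under which $\Delta_T=(P_T)_*\Delta_{123}$, so that $\Delta_e=(P_e)_*\Delta_{123}$.)

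The key step is to express $(f_T)_*$ as a tensor product of slot-wise operators. By functoriality $(f_T)_*=(q_T)_*\circ(p_T)_*$. After reordering the factors, $p_T$ is the projection $X^{|T|}\times X^{|T'|}\rar X^{|T|}$, so $(p_T)_*$ integrates over the $T'$-coordinates and is the identity on the $T$-coordinates; on a Künneth vector it is $\bigotimes_k\sigma_k$ with $\sigma_k=\mathrm{id}$ for $k\in T$ and $\sigma_k=\int_X\colon H^*(X)\rar H^*(\mathrm{pt})$ for $k\in T'$. Dually, the closed immersion $q_T$ inserts $e$ in the coordinates of $T'$, so its Gysin map is the identity on the $T$-slots and sends $H^*(\mathrm{pt})\rar H^*(X)$ by $1\mapsto[e]$ on each $T'$-slot. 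Because these act on independent tensor factors there are no Koszul signs between slots, and on each $T'$-slot the composite $H^*(X)\xrightarrow{\int_X}H^*(\mathrm{pt})\xrightarrow{\cdot[e]}H^*(X)$ is exactly the projector $\rho$ of $H^*(X)$ onto $H^2(X)$. Hence $(f_T)_*=\bigotimes_{k=1}^3\theta_k^{(T)}$, where $\theta_k^{(T)}=\mathrm{id}$ for $k\in T$ and $\theta_k^{(T)}=\rho$ for $k\in T'$.

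It then remains to perform the signed sum, and here the factorization collapses everything. Since the choice ``$k\in T$ or $k\in T'$'' is independent across the three slots and $(-1)^{|T'|}=\prod_{k}(-1)^{[k\notin T]}$, the distributive law yields
$$
(P_e)_*=\sum_{T}(-1)^{|T'|}\bigotimes_{k=1}^3\theta_k^{(T)}=\bigotimes_{k=1}^3(\mathrm{id}-\rho).
$$
Now $\mathrm{id}-\rho$ is the projector of $H^*(X)$ onto $H^0(X)\oplus H^1(X)$ killing $H^2(X)$ (in particular $(P_e)_*$ is visibly idempotent), so $(P_e)_*$ is the projector of $H^*(Y)$ onto $(H^0(X)\oplus H^1(X))^{\otimes 3}$, i.e.\ onto the sum of those Künneth summands in which every factor has degree $\le 1$. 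Reading off degrees finishes the argument: in total degrees $4,5,6$ every summand has at least one factor in $H^2(X)$ (all three degrees $\le1$ forces total $\le 1+1+1<4$), so $(P_e)_*$ annihilates $H^4(Y),H^5(Y),H^6(Y)$; and in degree $3$ the only summand with all factors of degree $\le1$ is $H^1(X)\otimes H^1(X)\otimes H^1(X)$, onto which $(P_e)_*$ restricts as the identity, whence $H^3(Y)$ maps onto it.

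I expect the main obstacle to be the second step: pinning the two Gysin maps $(p_T)_*$ and $(q_T)_*$ down on the nose, and in particular verifying that the fiber-integration and the point-insertion recombine slot by slot with no stray signs, so that the composite really is the $H^2$-projector $\rho$ in each $T'$-coordinate. Once this slot-wise description is secured, the factorization $(P_e)_*=(\mathrm{id}-\rho)^{\otimes 3}$ and the degree bookkeeping are immediate; had one instead used the transpose graph (hence $f_T^*$ in place of $(f_T)_*$), one would obtain the complementary projector onto $(H^1\oplus H^2)^{\otimes 3}$, which acts as the identity on $H^2(X)^{\otimes 3}\subset H^6(Y)$ and so would contradict the statement — a useful consistency check that the pushforward convention is the correct one.
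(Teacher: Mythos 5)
Your overall strategy is sound and, unlike the paper (whose ``proof'' is just a citation of \cite[Corollary 2.6]{GrossSchoen}), you give a genuine computation: the identification $(P_T)_*=(f_T)_*$ via the projection formula, the slot-wise description $(f_T)_*=\bigotimes_k\theta_k^{(T)}$ with $\theta_k=\mathrm{id}$ on the $T$-slots and $\theta_k=\rho$ (the projector onto $H^2(X)$) on the $T'$-slots, and the observation that no Koszul signs intervene because $\rho$ factors through even-degree classes, are all correct, as is your check that the pushforward convention is the one compatible with $\Delta_e=(P_e)_*\Delta_X$.

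There is, however, a genuine gap at the distributive-law step. The identity
$$
\sum_{T}(-1)^{|T'|}\bigotimes_{k=1}^{3}\theta_k^{(T)}=\bigotimes_{k=1}^{3}(\mathrm{id}-\rho)
$$
is valid only if $T$ runs over \emph{all eight} subsets of $\{1,2,3\}$, including $T=\emptyset$, whose contribution is $(-1)^{3}\rho^{\otimes 3}=-\rho^{\otimes 3}$ (the correspondence $P_\emptyset$ being the graph of the constant map to $(e,e,e)$). But $P_e$ as defined in the paper is the sum over the seven \emph{nonempty} subsets only, so what you actually get is
$$
(P_e)_*=(\mathrm{id}-\rho)^{\otimes 3}+\rho^{\otimes 3},
$$
which acts as the \emph{identity} on $H^6(Y)=H^2(X)\otimes H^2(X)\otimes H^2(X)$ rather than annihilating it; concretely, $(P_e)_*[\mathrm{pt}]=(1-3+3)[\mathrm{pt}]=[\mathrm{pt}]$. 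The extra term $\rho^{\otimes 3}$ is supported entirely on $H^6$, so your conclusions for $H^5(Y)$, $H^4(Y)$ and for $H^3(Y)\twoheadrightarrow H^1\otimes H^1\otimes H^1$ survive, but the $H^6$ claim does not follow from your argument (and is in fact false for the seven-term $P_e$). To repair this you must either work with the projector that includes the term $-P_\emptyset$ --- which is harmless for the cycle-level identity $\Delta_e=(P_e)_*\Delta_X$, since the pushforward of the $1$-cycle $\Delta_X$ to a point vanishes --- or explicitly restrict the statement to $H^3$, $H^4$, $H^5$. Your proof should at minimum flag that the factorization requires the empty-set term.
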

\begin{proof}
See \cite[Corollary 2.6, p.654]{GrossSchoen}.
\end{proof}

The modified diagonal cycle can now be written as:
$$
\Delta_e= (P_e)_*\Delta_X
$$
where $\Delta_X\subset X\times X\times X$ is the diagonal $\{(x,x,x): x\in X\}$.

Consider the natural morphism, fixing a basepoint $p\in X$.
$$
\begin{array}{ccccc}\label{summap}
f:Y & \rar & Sym^3X &\rar & J(X)\\
(a,b,c) &\mapsto & a+b+c & \rar & a+b+c -3p\\
\end{array}
$$

The Ceresa cycle in the Jacobian $J(X)$ is the cycle $X_e-X_e^-$, fixing the base point $e$ for the embedding $X\hookrightarrow J(X)$.

\begin{proposition}
There is an equality of the push-forward cycle with the Ceresa cycle, 
$$
f_*\Delta_e = 3(X_e-X_e^-)
$$
in the group of algebraic cycles modulo algebraic equivalence.
\end{proposition}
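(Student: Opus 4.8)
The plan is to compute $f_*$ on each of the seven diagonal components separately and then to diagonalize the resulting classes under the multiplication-by-$n$ endomorphisms $[n]$ of $J(X)$. First I would use that proper push-forward preserves algebraic equivalence and that translations on an abelian variety act trivially modulo algebraic equivalence, so that only the ``homogeneous degree'' of each restricted summing map matters. Restricting $f$ to $\Delta_{123}$ gives $x\mapsto 3(x-p)=[3](x-p)$, restricting to each of $\Delta_{12},\Delta_{13},\Delta_{23}$ gives $x\mapsto 2(x-p)+(e-p)$, and restricting to each of $\Delta_1,\Delta_2,\Delta_3$ gives $x\mapsto (x-p)+\text{const}$. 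Writing $X_e$ for the image of $X$ under $x\mapsto x-e$ and absorbing the translations, I would obtain
$$
f_*\Delta_e \;\equiv\; [3]_*X_e - 3\,[2]_*X_e + 3\,X_e \pmod{\text{alg.\ equiv.}}.
$$

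The second step is to invoke the Beauville decomposition of $CH_1(J(X))_\Q$ into simultaneous eigenspaces for the $[n]_*$. Writing $X_e=\sum_{s} X_{(s)}$ with $X_{(s)}\in CH^{g-1}(J(X))_{(s)}$, the operator $[n]_*$ acts on $X_{(s)}$ by $n^{2+s}$, and since $[-1]_*X_e=X_e^-$ one gets $X_e^- = \sum_s (-1)^s X_{(s)}$, so that $X_{(0)}$ is the symmetric part and $X_{(1)}$ the antisymmetric (Ceresa) part. I would then record that for the curves under consideration the higher components vanish modulo algebraic equivalence, i.e.\ $X_{(s)}\equiv 0$ for $s\ge 2$; this holds for any curve of gonality at most three, in particular for the genus three non-hyperelliptic (hence trigonal) curves relevant here, by the classical Ceresa and Colombo--van Geemen vanishing. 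Hence $X_e\equiv X_{(0)}+X_{(1)}$ and $X_e^-\equiv X_{(0)}-X_{(1)}$.

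Substituting into the displayed formula and collecting coefficients, the argument concludes by a short computation: the coefficient of $X_{(0)}$ is $3^2-3\cdot 2^2+3=0$ and the coefficient of $X_{(1)}$ is $3^3-3\cdot 2^3+3=6$, so that
$$
f_*\Delta_e \;\equiv\; 6\,X_{(1)} \;=\; 3\bigl(X_{(0)}+X_{(1)}\bigr)-3\bigl(X_{(0)}-X_{(1)}\bigr)\;=\;3\,(X_e-X_e^-),
$$
as claimed. I expect the two genuine points to be: (i) getting the eigenvalue $n^{2+s}$ for $[n]_*$ on the curve class correct, which follows from $[n]_*[n]^*=n^{2g}\cdot\mathrm{id}$ together with $[n]^*=n^{2(g-1)-s}$ on $CH^{g-1}(J(X))_{(s)}$; and (ii) justifying the vanishing of the components $X_{(s)}$ with $s\ge 2$ modulo algebraic equivalence, which is exactly what makes the combination collapse onto the Ceresa class instead of leaving spurious higher terms. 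The latter is the main obstacle, and it is the step at which the hypothesis on the genus (equivalently, the gonality) of $X$ enters.
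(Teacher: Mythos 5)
Your proof is correct, but it is worth noting that the paper itself offers no argument here: its ``proof'' is a bare citation of Colombo's Proposition 2.9, whereas you supply a complete derivation. Your computation of $f_*$ on the seven components, giving $f_*\Delta_e \equiv [3]_*X_e - 3[2]_*X_e + 3X_e$ modulo translations (hence modulo algebraic equivalence), is right, as is the eigenvalue $n^{2+s}$ for $[n]_*$ on $X_{(s)}$ and the coefficient check $3^{s+2}-3\cdot 2^{s+2}+3$, which vanishes for $s=0$ and equals $6$ for $s=1$. The genuinely valuable point in your write-up is the one you flag yourself: the identity is \emph{not} formal, since for $s\ge 2$ the coefficient ($36$ for $s=2$, etc.) does not match the coefficient $6$ (for odd $s$) or $0$ (for even $s$) on the Ceresa side, so one must kill the higher Beauville components $X_{(s)}$, $s\ge 2$, modulo algebraic equivalence. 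The Colombo--van Geemen vanishing for $d$-gonal curves ($X_{(s)}\equiv 0$ for $s\ge d-1$) does exactly this for the genus-three curves in play (plane quartics are trigonal, and the hyperelliptic case is degenerate on both sides), so your argument is sound where the paper needs it. Your analysis also exposes something the paper's citation hides: as literally stated for a curve of arbitrary genus $g$, the proposition is gonality-dependent and is not expected to hold for general curves of large genus, where $X_{(2)}$ need not vanish modulo algebraic equivalence; the clean, unconditional statement is the eigencomponent identity displayed above, from which the claimed equality follows exactly when the $s\ge 2$ contributions die.
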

 
\begin{proof} See \cite[Proposition 2.9]{Colombo}.  
\end{proof}

\section{Specialization and Mayer-Vietoris exact sequence}

S. Bloch \cite{Bloch4} has defined specialization maps for higher Chow groups over a DVR $\cO$ with residue field $k$ and fraction field $F$. 
M. Levine \cite{Levine} has defined higher Chow groups $CH^*(X,*)$ for any variety $f: X \to B$ over a one-dimensional regular Noetherian scheme $B$.
We will assume that $B={\rm Spec}(\cO)$ for some local $k$--algebra and DVR $\cO$ with residue field $k$, 
and combine both results to obtain specialization maps in Levine's context.

Higher Chow groups are defined as homology groups of a certain complex $\cZ^*(X,*)$ defined in \cite{Levine}. Levine shows a localization sequence 
(distinguished triangle) 
$$
0 \to \cZ^{p-r}(Z,*) \to \cZ^p(X,*) \to \cZ^p(U,*),  
$$
where $Z \subset X$ is a closed subscheme of pure codimension $r$ and $U$ the open complement. Here, one has to be careful about the notion of 
dimension of a cycle, see \cite{Levine}. The induced coboundary map will be denoted by 
$$
\partial: CH^*(U,*) \to CH^{*-1}(Z,*-1). 
$$

Let $0 \in B$ be the closed point and $\pi$ a uniformizing parameter of $B$. We will apply this result in the case where
$Z=f^{-1}(0)$ is the closed fiber of $f$, and $r=1$.

There is a canonical class $[\pi] \in f^* CH^1(B \setminus \{0\},1) \subset CH^1(U,1)$ induced by the isomorphism
$$
CH^1(Y,1)=H^0(Y,{\mathcal O}_Y^*)
$$  
for the smooth variety $Y=B \setminus\{0\}$. 

We define the specialization morphism 
\begin{equation}\label{specializationmap}
\sp: CH^*(U,*) \to CH^*(Z,*) 
\end{equation}
via 
$$
\sp(W):=\partial(W \cdot [\pi]).
$$
The following is due to S. Bloch \cite{Bloch4} and W. Fulton \cite[pg. 398]{Fulton}:

\begin{proposition}\label{specialization}
The specialization map $\sp$ is a $k$--algebra homomorphism. 
\end{proposition}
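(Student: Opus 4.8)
\textit{Proof proposal.}

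The plan is to prove multiplicativity by exploiting the fact that the localization long exact sequence underlying the definition of $\partial$ is a sequence of modules over the ring $CH^*(X,*)$, so that the boundary satisfies the projection formula
$$
\partial\big(j^*\beta\cdot\alpha\big)=i^*\beta\cdot\partial\alpha,\qquad \beta\in CH^*(X,*),\ \ \alpha\in CH^*(U,*),
$$
where $j\colon U\hookrightarrow X$ and $i\colon Z\hookrightarrow X$ are the two inclusions. In Levine's setting this module structure, and the compatibility of $j^*$, $i_*$ and $\partial$ with it, comes directly from the $\lambda$--ring and $K$--theory with supports formalism of \S\ref{LevineK}. The two preliminary inputs I would record first are that $\sp$ is additive, which is clear, and that $\partial[\pi]=[Z]=1$ in $CH^0(Z)$: since $\pi$ is a uniformizer its tame symbol along the reduced fibre $Z$ is the fundamental class, which is the unit of $CH^*(Z,*)$. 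In particular $\sp(1)=\partial([\pi])=1$, so $\sp$ is unital.

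Granting these, I would first settle the case in which one of the two factors extends across the special fibre. If $\alpha=j^*\tilde\beta$ for some $\tilde\beta\in CH^*(X,*)$ and $\gamma\in CH^*(U,*)$ is arbitrary, the projection formula gives
$$
\sp(\alpha\cdot\gamma)=\partial\big(j^*\tilde\beta\cdot\gamma\cdot[\pi]\big)=i^*\tilde\beta\cdot\partial\big(\gamma\cdot[\pi]\big)=i^*\tilde\beta\cdot\sp(\gamma),
$$
and taking $\gamma=1$ shows $\sp(j^*\tilde\beta)=i^*\tilde\beta$; hence $\sp(\alpha\gamma)=\sp(\alpha)\sp(\gamma)$ whenever $\alpha$ is pulled back from $X$. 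In the classical range $*=0$ the localization sequence degenerates to a surjection $j^*\colon CH^*(X)\rar CH^*(U)$, every class extends, and the argument is already complete; this is Fulton's computation on p.\,398 of \cite{Fulton}, where $\sp$ is simply restriction $i^*$ to the special fibre.

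The remaining, and genuinely delicate, case is the higher one $CH^*(U,q)$ with $q\ge 1$, where $j^*$ is no longer surjective and a class is detected by its residue $\partial$. Here I would establish the bilinear identity $\partial(\alpha[\pi])\cdot\partial(\beta[\pi])=\partial(\alpha\beta[\pi])$ by the residue calculus for the tame symbol, in the spirit of Rost's proof that the specialization attached to a uniformizer is multiplicative on cycle modules. The mechanism is the Steinberg relation $[\pi]\cup[\pi]=[\pi]\cup[-1]$ together with the rule $\partial(\{u\}\cdot\delta)=-\{\bar u\}\cdot\partial\delta$ for a unit $u$: comparing the two residues forces a single extra factor of $[\pi]$, which the outer $\partial$ consumes through $\partial[\pi]=1$, while the leftover $[-1]=j^*[-1]$ is pulled back from $X$ and is therefore controlled by the projection formula. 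I expect this last step, namely the bookkeeping of signs and of the $2$--torsion contributed by $[-1]$, to be the main obstacle; it is cleanest if one normalizes the specialization with the symbol $[-\pi]$ rather than $[\pi]$ (the two differ by the pulled-back class $[-1]$) and works after $\otimes\Q$, as in the statement, where the torsion terms disappear. Once this identity is in hand, bilinearity of the product on $CH^*(U,*)$ together with the unital property proves that $\sp$ respects products, i.e. that it is a homomorphism of $k$--algebras.
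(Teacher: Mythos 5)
Your proposal is correct and follows essentially the same route as the paper, which reproduces Bloch's argument from \cite{Bloch4}: both rest on the Leibniz/projection formula for the boundary $\partial$ of the localization sequence, the splitting of classes on $U$ into ones extending over $X$ plus $[\pi]$--multiples of classes pulled back from $Z$ (the paper's $\alpha\oplus\vartheta_\pi$), and the Steinberg relation to absorb the extra factor of $[\pi]$ in the mixed and quadratic terms. The only cosmetic difference is that you make the tame-symbol computation with $[\pi]\cup[\pi]=[\pi]\cup[-1]$ explicit where the paper defers to the two displayed identities ``as in \cite{Bloch4}.''
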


Note that $\sp$ agrees with Fulton's specialization map $\sigma$ for $n=0$, and that for all cycles on $U$ which are restrictions of cycles
from $X$, specialization is simply given by taking the restriction from $X$ to the closed fiber over $Z$. 

\begin{proof}
We follow Bloch \cite{Bloch4}. Denote by $\cN_Z$, $\cN_X$ and $\cN_U$ the cycle DGA's $\cZ^*(Z,*)$, $\cZ^*(X,*)$ and $\cZ^*(U,*)$ respectively.
$\cN_X$ also has a multiplicative structure, but only in the derived category, since one uses a moving lemma.  There are natural restriction maps 
$$
\alpha: \cN_X \to \cN_U, \quad i^*: \cN_X \to \cN_Z. 
$$
There is also a pullback homomorphism $p^*: \cN_Z \to \cN_X$ and the coboundary $\partial: \cN_U \to \cN_Z[-1]$.
First one defines a section $\vartheta_\pi$ to $\partial$ by setting 
$$
\vartheta_\pi:= (\text{mult. with } [\pi]) \circ \alpha \circ p^*: \cN_Z[-1] \to \cN_U.
$$
This satisfies $\vartheta_\pi \circ \partial = id$ on $\cN_Z[-1]$ and induces an additive splitting 
$$
\alpha \oplus \vartheta_\pi: \cN_X \oplus \cN_Z[-1] \to \cN_U
$$
in the derived category. As in \cite{Bloch4} it follows that 
$$
\sp \circ \alpha= i^*: \cN_X \to \cN_Z.
$$
To finish the proof, one verifies the following formulas as in \cite{Bloch4}:
$$
\partial(x \cdot y)=\sp(x) \cdot \partial(y) + (-1)^{\deg(y)} \partial(x) \cdot \sp(y),
$$
$$
\partial(\alpha(x) \cdot \vartheta_\pi(y))=i^*(x) \cdot y= \sp(\alpha(x)) \cdot \partial(\vartheta_\pi(y)).
$$
\end{proof}

\subsection{Mayer-Vietoris spectral sequence} 

Let $Z$ be a union of $m$ components $Z_1,...,Z_m$ of the same dimension. Define
$$ 
Z^{[0]}={\coprod}_{1\leq i \leq m} Z_{i} \text{ and }
Z^{[1]} = {\coprod}_{1\leq i<j \leq m} Z_{i} \cap Z_{j}.
$$
Then there is an inclusion $i: Z^{[1]} \hookrightarrow Z^{[0]}$.

\begin{proposition}\label{MayerVietoris}
 We have the exact sequence
$$ CH^{r}(Z^{[0]},p)\to CH^{r}(Z,p) {\longrightarrow } 
{\rm Ker}\big(CH^{r-1}(Z^{[1]},p-1){\buildrel i_* \over \longrightarrow }  CH^{r}(Z^{[0]},p-1) \big) \to 0,$$
\end{proposition}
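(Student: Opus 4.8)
The plan is to obtain the sequence by comparing Levine's localization long exact sequence for $Z$ with the one for its normalization $\nu\colon Z^{[0]}\rar Z$, exploiting that $\nu$ is an isomorphism away from the double locus. Write $\Sigma\subset Z$ for the reduced double locus $\bigcup_{i<j}(Z_i\cap Z_j)$; it has pure codimension one in $Z$, and I set $U:=Z\setminus\Sigma$. Since $\nu$ is finite and restricts to the identity over $U$, I may regard $U$ also as the open complement of $\Sigma^{[0]}:=\nu^{-1}(\Sigma)\subset Z^{[0]}$. One checks that $\Sigma^{[0]}=\coprod_i(Z_i\cap\Sigma)$ consists of two copies of $Z^{[1]}$, distinguished by the two inclusions $\delta_0,\delta_1\colon Z^{[1]}\hookrightarrow Z^{[0]}$ (sending $Z_i\cap Z_j$ into $Z_i$, resp. into $Z_j$), whose alternating pushforward is the map $i_*=(\delta_0)_*-(\delta_1)_*$ of the statement. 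Here I assume, as holds in the degenerations of interest, that the components meet only pairwise, so that $\Sigma\cong Z^{[1]}$.

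First I would write down Levine's localization sequences for the pairs $(Z,Z^{[1]})$ and $(Z^{[0]},\Sigma^{[0]})$, both having codimension-one closed part:
\[
CH^{r-1}(Z^{[1]},p)\sta{i^Z_*}{\rar}CH^r(Z,p)\sta{\rho_Z}{\rar}CH^r(U,p)\sta{\partial_Z}{\rar}CH^{r-1}(Z^{[1]},p-1)\sta{i^Z_*}{\rar}CH^r(Z,p-1),
\]
\[
CH^{r-1}(Z^{[1]},p)^{\oplus2}\sta{\iota_*}{\rar}CH^r(Z^{[0]},p)\sta{\rho_0}{\rar}CH^r(U,p)\sta{\partial_0}{\rar}CH^{r-1}(Z^{[1]},p-1)^{\oplus2}\sta{\iota_*}{\rar}CH^r(Z^{[0]},p-1),
\]
where $\rho_Z,\rho_0$ are the restrictions to $U$, the $\partial$'s are the localization boundaries, $i^Z_*$ is the pushforward along $Z^{[1]}\hookrightarrow Z$, and $\iota_*$ is the pushforward along $\Sigma^{[0]}\hookrightarrow Z^{[0]}$, which on the two copies is $(\delta_0)_*$ and $(\delta_1)_*$. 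The crucial input is the functoriality of the localization triangle under the proper map $\nu_*$: since $\nu$ is the identity over $U$, the relevant squares commute, giving $\rho_Z\circ\nu_*=\rho_0$, $\nu_*\circ\iota_*=i^Z_*\circ s$ and, most importantly, $\partial_Z=s\circ\partial_0$, where $s(x,y)=x+y$ is the sum map induced by $\nu_*$ on $\Sigma^{[0]}$.

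Then I would define $\phi\colon CH^r(Z,p)\rar\ker(i_*)$ as follows. Because $s\circ\partial_0\circ\rho_Z=\partial_Z\circ\rho_Z=0$, the element $\partial_0\rho_Z(\xi)$ lies in $\ker(s)$, the anti-diagonal $\{(x,-x)\}\cong CH^{r-1}(Z^{[1]},p-1)$; I let $\phi(\xi)$ be its first coordinate. This $\phi(\xi)$ lies in $\ker(i_*)$, since $\partial_0\rho_Z(\xi)\in\mathrm{im}(\partial_0)=\ker(\iota_*)$ and the restriction of $\iota_*$ to the anti-diagonal is exactly $i_*$. Surjectivity of $\phi$ and exactness at $CH^r(Z,p)$ (with first map $\nu_*$) are then diagram chases through the two rows: for $x\in\ker(i_*)$ one has $(x,-x)\in\ker(\iota_*)=\mathrm{im}(\partial_0)$, lifts it to $\eta\in CH^r(U,p)$, notes $\partial_Z\eta=s(x,-x)=0$ so that $\eta=\rho_Z(\xi)$ with $\phi(\xi)=x$; and if $\phi(\xi)=0$ then $\rho_Z(\xi)\in\ker(\partial_0)=\mathrm{im}(\rho_0)$, so after subtracting a suitable $\nu_*(\zeta)$ one reduces $\xi$ into $\ker(\rho_Z)=\mathrm{im}(i^Z_*)=\mathrm{im}(\nu_*\circ\iota_*)$, which lands back in $\mathrm{im}(\nu_*)$.

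The main obstacle is the bookkeeping in Levine's theory for the singular reducible scheme $Z$: I must make sure the dimension/codimension indexing is consistent so that $\nu_*$ preserves codimension while the inclusions of the codimension-one loci raise it by one, and above all I must invoke the functoriality of the localization boundary under proper pushforward with an identified open part — this compatibility is precisely what yields $\partial_Z=s\circ\partial_0$ and drives the whole argument. A secondary point is the pairwise-intersection hypothesis $\Sigma\cong Z^{[1]}$; in the presence of triple intersections the stated three-term sequence is the low-degree edge of the Mayer--Vietoris spectral sequence of the closed cover $\{Z_i\}$, which one produces by carrying out the same localization comparison simplicially along the nerve $Z^{[\bullet]}$.
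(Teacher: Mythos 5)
Your argument takes a genuinely different, more self-contained route than the paper, which disposes of the proposition in one line by quoting the low-degree terms of the Mayer--Vietoris spectral sequence $E_1^{a,b}=CH^{a+r}(Z^{[-a]},-b)\Rightarrow CH^{r}(Z,-a-b)$ from M\"uller-Stach--Saito. Your comparison of the two localization sequences for $(Z,\Sigma)$ and $(Z^{[0]},\Sigma^{[0]})$ along the normalization $\nu$ is sound as far as it goes: the identity $\partial_Z=s\circ\partial_0$ does follow from functoriality of the localization triangle under proper pushforward when $\nu$ is an isomorphism over $U$, the anti-diagonal identification of $\ker(s)$ with $CH^{r-1}(Z^{[1]},p-1)$ is correct, the diagram chase for surjectivity and for exactness in the middle works (including the step $\mathrm{im}(i^Z_*)=\mathrm{im}(\nu_*\circ\iota_*)\subset\mathrm{im}(\nu_*)$, which uses surjectivity of $s$), and you rightly resolve the paper's ambiguous ``inclusion $i$'' by reading $i_*$ as the alternating pushforward $(\delta_0)_*-(\delta_1)_*$. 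What this buys is an explicit description of the second map of the sequence --- restrict to the smooth locus and take the localization boundary --- without any spectral-sequence machinery. What it costs is your standing hypothesis that the components meet only pairwise, so that $\Sigma\cong Z^{[1]}$ and $\Sigma^{[0]}$ is exactly two copies of it.

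That hypothesis is a genuine restriction here: the special fibre $\cY_0$ of the good model has eight components with nonempty triple intersections (already $Y_1\cap Y_2\cap Y_3$ is nonempty, lying over the points where $C'$ meets $\p^1$), so the case you actually prove does not cover the configuration to which the proposition is applied. Moreover, your closing deferral to ``the same comparison carried out simplicially'' understates the difficulty: once $Z^{[2]}\neq\emptyset$, the low-degree sequence of the spectral sequence naturally surjects onto a subquotient of $\ker(i_*)$ (one must mod out the image of $CH^{r-2}(Z^{[2]},p-1)$ and control higher filtration pieces), so recovering the statement with $\ker(i_*)$ itself requires an additional argument or a weaker formulation. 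The saving grace is that the paper only invokes the proposition for $p=0$, where the right-hand term is $\ker$ of a map between groups $CH^{*}(-,-1)$, which vanish; in that case the content reduces to surjectivity of $\nu_*\colon CH^r(Z^{[0]})\to CH^r(Z)$, which your first localization sequence already delivers without any pairwise-intersection assumption.
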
 

\begin{proof} This is part of the lower term sequence for the 3rd quadrant Mayer-Vietoris spectral sequence \cite{SaitoSMS}
$E^{a,b}_1= CH^{a+r}(Z^{[-a]},-b) \Rightarrow CH^{r}(Z,-a-b)$. 
\end{proof}


\section{Degeneration of the modified diagonal cycle}\label{degcycle}

We recall that good models of families of triple products of curves have been constructed in \cite[p.664]{GrossSchoen}. 
We refer to \cite{GrossSchoen} for the definition of  'good family' or 'good models', but we mention that they are flat and 
proper families over a $Spec(R)$ where $R$ is a dvr, and the total space of the family is smooth. When $\cC \rar Spec(R)$ 
is a stable family of curves with generic fibre smooth, a good model of the triple product $\cC^3 \rar Spec(R)$ is constructed 
by successive blow-ups and the components of the special fibre is explicit described in \cite[p.664-665]{GrossSchoen}.
We do not reproduce the discussion, since in the later proofs our specialization maps are defined only on the open smooth 
subset of the total space of the family, and it suffices to know how the special fibre is described.

\subsection{Good family of triple products}\label{start}

Suppose $\cC \rar B:={\rm Spec}(R)$ is a stable family of genus $3$ curves over a DVR $R$, 
with a special  singular fibre $C_0$. Then a good family of triple products
$$
\pi:\cY\rar B 
$$ 
exists. If $C$ is irreducible of genus three, with one node, then the normalization  $C'$  is a smooth, hyperelliptic curve of genus $2$.  
We assume that the inverse image of the node in $C'$ are Weierstrass points. The special fibre $C_0$ of $\cC\rar B$ is $C'\cup \p^1$, 
such that $C'\cap \p^1$ consists precisely of the two Weierstrass points.

The special fibre $\cY_0:=\pi^{-1}(0)$ has $8$ components \cite[p.666,Example 6.15]{GrossSchoen}:
\begin{eqnarray*}
Y_1= C'\times C'\times C' \\
Y_2= C'\times C' \times \p^1\\
Y_3=C'\times \p^1\times C'\\
Y_4=C'\times \p^1 \times \p^1 \\
Y_5= \p^1\times C'\times C'\\
Y_6=\p^1\times C'\times \p^1 \\
Y_7=\p^1\times \p^1 \times C'\\
Y_8= \p^1\times \p^1\times \p^1.
\end{eqnarray*}

The normalization is given by $V_0=\cY_0^{[0]}=\sqcup_{i=1}^8 Y_i$.

\subsection{Extending the modified diagonal cycle}\label{diagsp}

Assume that the stable family of curves $\cX\rar B$ has  a section $\tilde{e}$ over $B$, such that on the special fibre it corresponds to a Weierstrass point on the component $C'$.

Let $Y=X\times X\times X$ be the generic fibre of $\cY\rar B$. The naive extension $\overline{\Delta_e}$ is obtained by taking the closures of each 
irreducible component $\Delta_{i}, \Delta_{i,j},\Delta_{i,j,k}$. 

Denote $\cU$ the complement $\cY-\cY_0$. Consider the specialization map \eqref{specializationmap}:
$$
sp:CH^2(\cU)\rar CH^2(\cY_0).
$$
The image of $\Delta_e$ under the map $sp$ is the same as the restriction of $\overline{\Delta_e}$ to $\cY_0$.

In our situation, the Mayer-Vietoris sequence in Proposition \ref{MayerVietoris} gives a right exact sequence:
$$
 \rar CH^2(V_0) \rar CH^2(\cY_0) \rar \m{Ker}(CH^1(\cY_0^{[1]},-1)\sta{i_*}{\rar} CH^2(V_0,-1))\rar 0.
$$ 
Since the right term is zero, the specialization cycle $\Delta^0_e$ lifts to a cycle on $V_0$. We denote this cycle again by $\Delta^0_e$.

\subsection{Higher Chow cycle}

Note that the cycle $\overline{\Delta_e}$ can be written as:
$$
\overline{\Delta_e} := \overline{\Delta_{123}} -\ov{\Delta_{12}}-\ov{\Delta_{13}}-\ov{\Delta_{23}} + \ov{\Delta_{1}}+ \ov{\Delta_2} + \ov{\Delta_3}.
$$
Here
\begin{eqnarray*}
 \ov{\Delta_{123}} & := & \{x,x,x): x\in \cX\}\\
 \ov{\Delta_{12}} & := & \{(x,x,\tilde{e}): x\in \cX\} \\
 \ov{\Delta_{13}} & := & \{(x,\tilde{e},x): x\in \cX\} \\
 \ov{\Delta_{23}} & := & \{(\tilde{e},x,x): x\in \cX\}\\
 \ov{\Delta_1} & := & \{(x,\tilde{e},\tilde{e}): x\in \cX\} \\
 \ov{\Delta_2} & := & \{(\tilde{e},x,\tilde{e}): x\in \cX \} \\
 \ov{\Delta_3} & := & \{(\tilde{e},\tilde{e},x): x\in \cX\}.
 \end{eqnarray*}

We have the smooth families:
$$
\cU\rar W,\, \cJ(\cC_W)\rar W
$$
associated to the smooth projective family $\cC_W\rar W:=B-\{0\}$ of genus three curves. The first one is the family of triple products and the second is the smooth Jacobian family.
There is a proper morphism 
$$f:\cU\rar \cJ(\cC_W)
$$ over $W$, with respect to the section $\tilde{e}$. On the special fibre, the morphism extends and factors via the triple product of the singular nodal curve $C$  
whose normalization is $C'$ (see \eqref{start}), to the Jacobian of the single nodal curve. This is a generalised Jacobian, i.e., an extension of $J(C')$ by $\C^*$. 
The compactification is a $\p^1$-bundle over $J(C')$. 

\begin{lemma}
The special fibre of the Jacobian family admits a degree two map to the trivial extension $J(C')\times \p^1$. 
\end{lemma}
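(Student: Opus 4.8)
The plan is to identify the compactified special fibre of the Jacobian family explicitly as a projective bundle over $J(C')$, and then to produce the degree two map by a fibrewise squaring construction. As explained just before the statement, the special fibre of the Jacobian family is (the compactification of) the generalized Jacobian of the nodal curve $C$, an extension
$$
0 \rar \C^* \rar {\rm Pic}^0(C) \rar J(C') \rar 0,
$$
whose compactification is the $\p^1$-bundle $\p(\cO\oplus L)$ over $J(C')$, where $L\in {\rm Pic}^0(J(C'))$ is the line bundle classifying the extension. Since the trivial extension $J(C')\times\p^1$ is $\p(\cO\oplus\cO)$, the task is to build a degree two morphism $\p(\cO\oplus L)\rar \p(\cO\oplus\cO)$ over $J(C')$.

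First I would pin down the extension class. By the classical description of generalized Jacobians of nodal curves, the class of the extension above in ${\rm Ext}^1(J(C'),\C^*)\simeq \widehat{J(C')}$ is the image of $\cO_{C'}(p-q)\in J(C')$ under the principal polarization $J(C')\simeq\widehat{J(C')}$, where $p,q\in C'$ are the two points lying over the node. As the principal polarization is an isomorphism of groups, $L$ is $2$-torsion in ${\rm Pic}^0(J(C'))$ if and only if $\cO_{C'}(p-q)$ is $2$-torsion in $J(C')$. This is exactly where the hypothesis that $p$ and $q$ are Weierstrass points of the genus two hyperelliptic curve $C'$ enters: both $2p$ and $2q$ are linearly equivalent to the canonical (hyperelliptic) class $K_{C'}$, hence $2(p-q)\sim 0$ and therefore $L^{\otimes 2}\simeq\cO$.

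With $L^{\otimes 2}\simeq\cO$ available, I would construct the map by fibrewise squaring. On each fibre the assignment $[x:y]\mapsto[x^2:y^2]$ is the degree two self-map of $\p^1$; globally $x^2$ is a section of $\cO^{\otimes 2}=\cO$ and $y^2$ a section of $L^{\otimes 2}\simeq \cO$, so squaring lands in $\p(\cO\oplus\cO)=J(C')\times\p^1$. Concretely, on an open cover $\{U_\alpha\}$ trivializing $L$ with transition functions $g_{\alpha\beta}$, the isomorphism $L^{\otimes 2}\simeq\cO$ means $g_{\alpha\beta}^2=h_\alpha/h_\beta$ for suitable $h_\alpha\in\cO^*(U_\alpha)$; the local maps $[x:y_\alpha]\mapsto[x^2:h_\alpha^{-1}y_\alpha^2]$ then agree on overlaps and glue to a global morphism $\p(\cO\oplus L)\rar J(C')\times\p^1$, which is fibrewise of degree two, as required.

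The main obstacle is the identification and $2$-torsion of the extension class; once these are in place the squaring construction is essentially forced. I would therefore spend the real care on verifying that the extension class of the compactified special fibre is exactly $\cO_{C'}(p-q)$ (and not a translate or a multiple), keeping the section $\tilde e$ and the chosen base point used consistently, and on checking that the gluing above is compatible with the relative $\cO(1)$, so that the squaring assignment is a genuine morphism of projective bundles rather than merely a fibrewise one.
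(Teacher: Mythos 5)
Your proof is correct and rests on the same key observation as the paper's: because the two preimages $p,q$ of the node are Weierstrass points of the genus two curve $C'$, the class $\cO_{C'}(p-q)$ classifying the $\C^*$-extension is $2$-torsion in ${\rm Ext}^1(J(C'),\C^*)\simeq \widehat{J(C')}$. Where the paper simply reads off a degree two isogeny from this $2$-torsion via the ${\rm Hom}/{\rm Ext}$ exact sequence, you realize the same map explicitly as fibrewise squaring on $\p(\cO\oplus L)$ --- which is just the quotient by $\mu_2\subset\C^*$ extended to the compactification --- so the two arguments are essentially identical, with yours supplying the construction in coordinates.
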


\begin{proof} Since the node on $C$ corresponds to two Weierstrass points on $C'$, in the exact sequence
\[
{\rm Hom}(J(C')_2, \C^*) \to {\rm Ext}^1(J(C'),\C^*) \to {\rm Ext}^1(J(C'),\C^*)
\]
the extension element of the special fibre in the middle group maps to zero in the group on the right.
This corresponds to a degree two isogeny between the special fiber and $J(C')\times \p^1$.
\end{proof}

Hence, one has a diagram of pushforward maps:

$$
\begin{array}{ccc}
Z^2(\cU)_\Q  & {\buildrel {\rm sp} \over \rar} &  Z^2(\cY^{\bullet})_\Q\\
f_* \downarrow && \downarrow g_*\\
Z^2(\cJ(\cC_W))_\Q &  {\buildrel {\rm sp} \over \rar}  & Z^2(J(C') \times \p^1)_\Q \\
\end{array}
$$
Here we note that $g_*$ is in fact composed with the pushforward onto $J(C') \times \p^1$, via the degree two morphism mentioned above. 
Similarly, the second horizontal map $\rm{sp}$ is actually composed with pushforward onto $J(C')\times \p^1$.

Here $Z^2(\cY^\bullet)$ denotes the group of codimension two cycles on the simplicial set $\cY^\bullet$. 
 
\begin{lemma} 
The diagram commutes, up to rational equivalence. 
\end{lemma}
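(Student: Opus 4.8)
The plan is to obtain the commutativity from the compatibility of the specialization map with proper pushforward, applied to an extension of $f$ over all of $B$. First I would record that the morphism $f:\cU\rar \cJ(\cC_W)$, defined a priori only over $W=B-\{0\}$, extends to a proper morphism $\tilde f:\cY\rar \ov{\cJ}$ over $B$, where $\ov{\cJ}\rar B$ is the compactified Jacobian family (the $\p^1$-bundle over the Jacobian family whose special fibre is the compactified generalized Jacobian), as described before the statement. This $\tilde f$ restricts to $f$ over $W$ and to a morphism $\bar f_0:\cY_0\rar \ov{\cJ}_0$ on the special fibres. By the construction recalled just above, both the right-hand map $g_*$ and the bottom map $\sp$ are, respectively, $(\bar f_0)_*$ and the Jacobian specialization followed by one and the same degree-two pushforward $h_*$ onto $J(C')\times \p^1$ coming from the previous lemma. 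Since $h_*$ occurs in both composites, it suffices to prove that the inner square commutes, i.e. that $(\bar f_0)_*\circ \sp=\sp\circ f_*$ as maps $Z^2(\cU)_\Q\rar Z^2(\ov{\cJ}_0)_\Q$ modulo rational equivalence.

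Next, since all cycles here lie in $Z^2$, that is, the simplicial degree is $s=0$, our $\sp$ agrees with Fulton's specialization homomorphism $\sigma$ (as noted after Proposition \ref{specialization}), and $\sigma$ is known to commute with proper pushforward. I would either invoke this directly for $\tilde f$, or reproduce its mechanism through the defining formula $\sp(W)=\partial(W\cdot[\pi])$ of \eqref{specializationmap}: the class $[\pi]$ is pulled back from the base, so $\tilde f^*[\pi]=[\pi]$ because $\tilde f$ lies over $B$, and the projection formula gives $f_*(W)\cdot[\pi]=f_*\big(W\cdot[\pi]\big)$; combining this with the functoriality of the localization coboundary $\partial$ along the proper morphism $\tilde f$, namely $\partial\circ f_*=(\bar f_0)_*\circ \partial$, yields
$$
\sp(f_*W)=\partial\big(f_*(W\cdot[\pi])\big)=(\bar f_0)_*\,\partial(W\cdot[\pi])=(\bar f_0)_*\,\sp(W).
$$
Applying $h_*$ to both sides recovers the full diagram.

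The hard part will be the geometric inputs justifying these formal steps. One must check that $f$ really does extend to a proper $\tilde f$ over $B$ with target of the claimed structure, and that the components of $\ov{\Delta_e}$ and their images meet the special fibre in the correct codimension, so that the intersection with $[\pi]$ and the pushforwards are defined and the projection formula applies; this is exactly where the good-model hypotheses and the restriction to the smooth locus $\cU$ enter. Moreover, because the product on the cycle complex exists only in the derived category via a moving lemma (compare the proof of Proposition \ref{specialization}), the identities $\tilde f^*[\pi]=[\pi]$ and the projection formula hold only after passing to rational equivalence; this is the precise origin of the qualifier \emph{up to rational equivalence}, and I would track the moving-lemma choices to confirm that the resulting ambiguity is a genuine rational equivalence.
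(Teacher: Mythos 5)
Your proposal is correct and follows essentially the same route as the paper: both rest on the compatibility of the specialization map with proper pushforward for the (extended) morphism from the family of triple products to the compactified Jacobian family, with the degree-two map onto $J(C')\times \p^1$ factored out. The paper's own proof merely records the explicit description of $g$ on each component $Y_i$ and then asserts that "both $f_*$ and $g_*$ make sense in families and the diagram commutes with specialization," whereas you supply the underlying mechanism --- $\tilde f^*[\pi]=[\pi]$, the projection formula, and $\partial\circ f_*=(\bar f_0)_*\circ\partial$ --- which is a welcome amplification of the same argument rather than a different one.
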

\begin{proof}
The pushforward map $g_*$ is induced by a finite morphism $V_0=\cY_0^{[0]} \rar J(C') \times \p^1$. 
On the component $Y_1$ it is given by $f': C' \times C' \rar J(C')$ and the hyperelliptic
map $C' \rar \p^1$. On the $Y_2$, $Y_3$ and $Y_5$ it is $f' \times {\rm id}$, and degenerate on 
the other components, since there is no non-constant morphism from $\p^1$ to $J(C')$.  
Both $f_*$ and $g_*$ make sense in families and the diagram commutes with specialization up to rational equivalence.
\end{proof}

\begin{definition}
$\Delta_{e,1}:= {\rm sp} (\Delta_e)$. 
\end{definition}

\begin{lemma} The specialization $\Delta_{e,1}$ is supported  on $Y_1$, $Y_2$, $Y_3$ and $Y_5$. 
The restriction to $Y_1$ vanishes in $CH^2(Y_1)_\Q$, if $e$ is a Weierstrass point on $C'$.
\end{lemma}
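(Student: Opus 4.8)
The plan is to compute $\Delta_{e,1}={\rm sp}(\Delta_e)$ as the restriction $\ov{\Delta_e}|_{\cY_0}$ of the naive closure to the special fibre, which is legitimate by the remark following Proposition \ref{specialization}, since each $\ov{\Delta_T}$ extends over $\cY$. By \S\ref{diagsp} this class lifts to $V_0=\sqcup_{i=1}^8 Y_i$, so I may work in $CH^2(V_0)_\Q=\bigoplus_{i=1}^8 CH^2(Y_i)_\Q$ and determine, term by term, the restriction of each $\ov{\Delta_T}$ to each component $Y_i$.

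For $T\subset\{1,2,3\}$ the cycle $\ov{\Delta_T}$ has the coordinates indexed by $T$ equal to the moving point $x\in\cX$ and the remaining coordinates fixed at the section $\tilde e$, which specialises to the Weierstrass point $w\in C'$. Restricting to $\cY_0$ and splitting $C_0=C'\cup\p^1$ accordingly, the locus $x\in C'$ lands in $Y_1=C'\times C'\times C'$ and contributes the corresponding diagonal term of the Gross--Schoen cycle on $(C')^3$, while the locus $x\in\p^1$ lands in the component carrying a $\p^1$ in exactly the slots of $T$ and a $C'$ elsewhere. Thus the single-index terms $\ov{\Delta_1},\ov{\Delta_2},\ov{\Delta_3}$ contribute, besides $Y_1$, to $Y_5,Y_3,Y_2$ respectively, all of which are permitted, whereas the double- and triple-index terms $\ov{\Delta_{23}},\ov{\Delta_{13}},\ov{\Delta_{12}},\ov{\Delta_{123}}$ produce a priori contributions to the forbidden components $Y_4,Y_6,Y_7,Y_8$.

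It remains to remove these forbidden contributions. On a component $Y_i$ carrying at least two $\p^1$-factors, the $x\in\p^1$ part is a rational curve, hence rationally equivalent in $CH^2(Y_i)_\Q$ to a sum of coordinate rulings, each a full $\p^1$ in one slot with the other coordinates fixed. Sliding the fixed coordinates that occupy a $\p^1$-slot to one of the node points $w_1,w_2\in C'\cap\p^1$ gives, within $Y_i$, a rational equivalence to a curve lying in an intersection $Y_i\cap Y_j$ for which $Y_j$ carries a $\p^1$ in only that one slot, i.e.\ $Y_j\in\{Y_2,Y_3,Y_5\}$. By the Mayer--Vietoris description of the lift in Proposition \ref{MayerVietoris}, a cycle supported on $Y_i\cap Y_j$ may be transferred from $Y_i$ to $Y_j$ without changing the image class in $CH^2(\cY_0)_\Q$. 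Carrying this out for every ruling on $Y_4,Y_6,Y_7,Y_8$ yields a representative of $\Delta_{e,1}$ supported on $Y_1,Y_2,Y_3,Y_5$.

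Finally, the restriction of $\Delta_{e,1}$ to $Y_1=(C')^3$ is, by the second paragraph, exactly the sum of the seven diagonal terms with basepoint $w$, that is, the Gross--Schoen modified diagonal cycle $\Delta_w$ on the triple product of the hyperelliptic genus-two curve $C'$ with $w$ a Weierstrass point. By \cite[Proposition 4.8, p.658]{GrossSchoen} this cycle is torsion of order $6$ in $CH_1((C')^3)$, hence vanishes in $CH^2(Y_1)_\Q$. I expect the main obstacle to be the bookkeeping in the third paragraph, namely checking that every ruling on the $\p^1$-heavy components can be routed through a boundary intersection into a permitted single-index component; once the support statement is in place, the vanishing on $Y_1$ is immediate from the cited Gross--Schoen computation.
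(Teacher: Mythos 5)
Your argument is correct and reaches the same conclusion, but your route through the support statement is genuinely different from the paper's. The paper kills the contributions on $Y_4,Y_6,Y_7,Y_8$ by pushing forward under $g$ to $J(C')\times\p^1$: since there is no non-constant map from $\p^1$ to $J(C')$, $g_*$ vanishes on those components, and the fact that $g^*g_*$ is a multiple of the identity then forces the classes there to be torsion, hence zero rationally. You instead work entirely inside the special fibre: you expand each diagonal-type curve on a $\p^1$-heavy component into coordinate rulings by an explicit rational equivalence (the small diagonal of $(\p^1)^3$ is equivalent to the three rulings, the diagonal of $\p^1\times\p^1$ to the two rulings), slide the fixed points to the nodes of $C'\cap\p^1$, and re-route each ruling through the double locus into $Y_2$, $Y_3$ or $Y_5$, using that $CH^2(V_0)\to CH^2(\cY_0)$ is a sum of pushforwards, so a cycle supported on $Y_i\cap Y_j$ may be reassigned to either component. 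Your version is more elementary and avoids invoking the projection formula on components where $g$ is degenerate (hence not finite), which is the delicate point of the paper's argument; it also exhibits the transferred pieces as decomposable ruling cycles, which is harmless for the later indecomposability discussion. What the paper's route buys instead is the slightly stronger assertion that the actual restrictions to $Y_4,Y_6,Y_7,Y_8$ are torsion, whereas you only produce a representative supported on $Y_1,Y_2,Y_3,Y_5$ --- but that is all the lemma states. The vanishing on $Y_1$ is handled identically in both proofs via Gross--Schoen, Proposition 4.8. The one caveat, which you share with the paper, is that both arguments treat the $Y_i$ as literal products even though the good model is obtained by blow-ups, so strictly the proper transforms of the diagonals and your rational equivalences should be checked on the actual components.
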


\begin{proof}
The specialization is a codimension two cycle on the simplicial set $\cY^\bullet$. However, it is supported on the skeleton $\cY^{[0]}$ of the simplicial set. Furthermore, 
by the Mayer-Vietoris sequence in \S \ref{diagsp}, it lies in the normalization $V_0$. But $V_0$ is a disjoint union of eight components 
$Y_1,...,Y_8$, see \S \ref{start}. 
Hence, we consider the specialization on  each component $Y_i$, as follows.

The specialization of $\Delta_{e,1}$ to $Y_1$ is the restriction of (sum of) the closure in $\cY$ of each diagonal components of the modified diagonal cycle on $\cU$, to $Y_1$. 
The restrictions of the closures correspond to the diagonal component of the modified diagonal cycle $\Delta_e$ on $C'\times C'\times C'$, 
and $e$ is a Weierstrass point on $C'$. In other words, the cycle ${\rm sp}(\Delta_{e,1})$ is the modified diagonal cycle on $C'\times C'\times C'$. Hence, 
by \cite[Proposition 4.8, p.658]{GrossSchoen}, 
we conclude that this class is rationally equivalent to zero. The restrictions to $Y_4$, $Y_6$, $Y_7$ and $Y_8$ are zero
as $g$ is constant on these components. More concretely, $g_*$ of the specialization on these components is the zero map, since there are no non-constant maps 
from $\p^1$ to $J(C')$. Furthermore, $g^*g_*$ on $\Delta_{e,1}$ is a multiple of $\Delta_{e,1}$. 
Hence triviality of $g_*$ on $Y_4,Y_6,Y_7,Y_8$ implies that the specialization on these components is torsion.
\end{proof}

\begin{lemma}\label{subChow}
The pushforward cycle $g_* \Delta_{e,1}$ has a representative in the subgroup
$$
Z^2_\partial(J(C') \times \p^1)_\Q:=\ker\left((i_0^*,i_\infty^*): Z^2(J(C') \times \p^1)_\Q \rar \oplus^2 Z^2(J(C'))_\Q\right).
$$
Therefore, $g_* \Delta_{e,1}$ defines a higher Chow cycle in $CH^2(J(C'),1)_\Q$.
\end{lemma}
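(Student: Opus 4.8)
The plan is to compute the two cycle--level boundary restrictions $i_0^*(g_*\Delta_{e,1})$ and $i_\infty^*(g_*\Delta_{e,1})$ and to show that they vanish; then $g_*\Delta_{e,1}$, after discarding any fibral components lying over points of $\p^1$ other than $0,\infty$ (which do not meet the two boundary fibres), is a representative in $Z^2_\partial(J(C')\times\p^1)_\Q=\ker(i_0^*,i_\infty^*)$. The second assertion is then formal: by the cubical description of Levine--Bloch recalled in \S\ref{LevineK}, $Z^2_\partial$ is precisely the group of precycles computing $CH^2(J(C'),1)_\Q$, and every element of $\ker(i_0^*,i_\infty^*)$ is automatically a cubical cycle, hence defines a class in $CH^2(J(C'),1)_\Q$.

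To compute the restrictions I would use proper base change: $i_0^*\circ g_* = g_*\circ r_0$, where $r_0$ denotes restriction to the preimage of the fibre $J(C')\times\{0\}$, and similarly for $\infty$. The key geometric input is that the $\p^1$--coordinate on the target, coming from the compactification of the $\C^*$--extension of the generalized Jacobian, takes the boundary values $0$ and $\infty$ exactly at the two node--preimages $w_1,w_2$; these are the Weierstrass points along which the components $Y_i$ of $\cY_0$ are glued. Consequently the preimage of the two boundary fibres is supported on the double loci $Y_i\cap Y_j$, and each $i_t^*(g_*\Delta_{e,1})$ is the pushforward to $J(C')$, by $f'$, of the restriction of $\Delta_{e,1}$ to these double loci.

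The vanishing of these $0$--cycles rests on two inputs. First, since $\Delta_{e,1}=\ov{\Delta_e}|_{\cY_0}$ is the restriction of a single global cycle, the Mayer--Vietoris compatibility of \S\ref{diagsp} forces its restrictions to each double locus, computed from the two adjacent components, to coincide; combined with the alternating signs in $\Delta_e$, the contributions of the ``mixed'' components $Y_2,Y_3,Y_5$ pair off against those coming from $Y_1$. Second, the residual contribution is a Gysin slice of the modified diagonal of the hyperelliptic curve $C'$, pushed forward by $f'$; since $e$ is a Weierstrass point, this modified diagonal is rationally trivial by \cite[Proposition 4.8, p.658]{GrossSchoen}, so its slice is rationally trivial and pushes forward to zero. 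One may also read the cancellation off the symmetric form $g_*\Delta_{e,1}=3\,\sp(X_e-X_e^-)$: the hyperelliptic involution, being $-1$ on $J(C')$ and interchanging the ends $0,\infty$, matches the two Ceresa curves on the boundary because $e$ is fixed by it.

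The step I expect to be the main obstacle is making the two middle steps honest at the level of cycles rather than classes, on the good model of \cite{GrossSchoen}. Concretely, the naive closures of the diagonal pieces acquire extra components over the node once $\cC^3$ is blown up, so the vertical contributions on $Y_2,Y_3,Y_5$ must be balanced against the exceptional and $Y_1$--contributions with the correct multiplicities, keeping track of the ramification of $C'\to\p^1$ at $w_1,w_2$ and of the degree--two isogeny onto $J(C')\times\p^1$. A related subtlety is the passage from the merely \emph{algebraic}--equivalence identity $f_*\Delta_e=3(X_e-X_e^-)$ to the \emph{rational}--equivalence statement needed here, together with the requirement that the final representative meet both boundary fibres properly, which may force a preliminary moving within the rational--equivalence class.
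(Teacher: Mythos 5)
The paper's entire proof of this lemma is one sentence: the pushforward $g_*\Delta_{e,1}$ is supported on the generalized Jacobian of the nodal curve, which is the (trivialized) $\C^*$-extension of $J(C')$ sitting inside the $\p^1$-bundle compactification; a cycle supported there meets neither boundary copy of $J(C')$, so it already gives a representative in $Z^2_\partial(J(C')\times\p^1)_\Q$, and the passage to $CH^2(J(C'),1)_\Q$ is then the cubical description recalled in \S\ref{LevineK}. You never invoke this support statement. Instead you set out to compute $i_0^*(g_*\Delta_{e,1})$ and $i_\infty^*(g_*\Delta_{e,1})$ on the good model and cancel them by hand using the alternating structure of $\Delta_e$, Mayer--Vietoris, and the hyperelliptic involution. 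That is a genuinely different, and much heavier, route; the missing idea relative to the paper is geometric rather than computational: identify where the specialized cycle actually lives (in the open part $J(C')\times\C^*$), rather than trying to show that its traces on the two boundary fibres cancel.

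As written, your route does not close, for the reason you yourself flag. The lemma asks for a representative in a subgroup of the cycle group $Z^2(J(C')\times\p^1)_\Q$, so the cancellations must hold at the level of cycles, not of rational equivalence classes. But your two main cancellation inputs are class-level statements: Gross--Schoen's Proposition 4.8 makes the modified diagonal on $C'\times C'\times C'$ torsion in the Chow group, not zero as a cycle, so ``its slice is rationally trivial and pushes forward to zero'' does not give $i_0^*(g_*\Delta_{e,1})=0$ in $Z^2(J(C'))_\Q$; and $f_*\Delta_e=3(X_e-X_e^-)$ holds only modulo algebraic equivalence, which cannot pin down the boundary restrictions of a specific representative. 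Repairing this by ``a preliminary moving within the rational-equivalence class'' is exactly the main obstacle you name and do not resolve, and an uncontrolled move would also undermine the later indecomposability argument, which is applied to this particular specialization. So the proposal should be counted as having a genuine gap at its central step, one that the paper's support argument is designed to avoid entirely.
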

\begin{proof} This is because the pushforward $g_*$ (onto $J(C')\times \p^1$) of $\Delta_{e,1}$ is supported on the generalised Jacobian, which is a trivial $\C^*$-extension of $J(C')$.
\end{proof}

We will now use the following results, which are essentially due to Colombo and van Geemen \cite{Colombo, Colombo-van-Geemen} and Green-Griffiths-Kerr \cite{GGKerr}: 

\begin{proposition}{~} \\
(a) The cycle $g_* \Delta_{e,1}$ is a degeneration of Ceresa's cycle up to algebraic equivalence. \\
(b) The Abel-Jacobi class of Ceresa's cycle specializes to the higher Abel-Jacobi invariant of Collino's cycle $(C'_p,h_p)+(C'_q,h_q)$ in 
$CH^2(C'\times C',1)_\Q$, as defined in \cite{Collino}.
\end{proposition}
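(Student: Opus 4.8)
The plan is to prove (a) formally, from the commutative square of push-forwards together with the known image of the modified diagonal cycle, and to reduce (b) to the limit Abel-Jacobi computations already available in the literature. For (a), I would begin from the square of push-forward maps established in the preceding lemma, which gives $g_* \circ \sp = \sp \circ f_*$ on $Z^2(\cU)_\Q$ up to rational equivalence. Since $\Delta_{e,1} = \sp(\Delta_e)$ by definition, applying this to $\Delta_e$ yields
$$
g_* \Delta_{e,1} = g_* \sp(\Delta_e) = \sp(f_* \Delta_e).
$$
By the identity $f_* \Delta_e = 3(X_e - X_e^-)$ of \cite{Colombo}, valid modulo algebraic equivalence, the right-hand side equals $3\,\sp(X_e - X_e^-)$, the specialization of three times the Ceresa cycle. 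Since Fulton-style specialization descends to cycles modulo algebraic equivalence (cf. Proposition \ref{specialization}), this exhibits $g_* \Delta_{e,1}$ as the degeneration of Ceresa's cycle, proving (a). The one point needing care is that $f_* \Delta_e = 3(X_e - X_e^-)$ must be read as an identity in the family $\cJ(\cC_W) \rar W$, so that both sides may be specialized; this is legitimate because the relation holds fibrewise and $f_*$ and the Ceresa cycle vary algebraically over the smooth base $W$.

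For (b), the strategy is to pass from cycles to their regulator classes and to invoke the compatibility of the cycle class map $c_{r,s}$ with the degeneration. I would first set up the variation of Hodge structure on $H^3$ of the family of Jacobians over $W$, together with its limit mixed Hodge structure at $0$; the generalized Jacobian of the special fibre, an extension of $J(C')$ by $\C^*$, supplies the weight-filtered data governing this limit. The Abel-Jacobi class of the generic Ceresa cycle is then a section of the associated family of intermediate Jacobians, and its limit is computed by the limit Abel-Jacobi map of Green-Griffiths-Kerr. The regulator target for $CH^2(J(C'),1)_\Q$ is exactly the Deligne cohomology group appearing in the short exact sequence recalled in the Preliminaries, and I would locate the limit of the Ceresa Abel-Jacobi class inside it. The final step is to match this limit with the regulator of Collino's cycle $(C'_p, h_p)+(C'_q, h_q)$, with $p, q$ the two Weierstrass points lying over the node; this identification is precisely Collino's computation in \cite{Collino}, reformulated through limit Abel-Jacobi maps in \cite[IV D]{GGKerr}, which I would cite rather than reprove.

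The main obstacle lies in (b): one must make rigorous the assertion that the regulator commutes with the limiting procedure, so that the naive limit of the family of normal functions attached to the Ceresa cycles agrees with the regulator of the specialized higher Chow cycle $g_* \Delta_{e,1}$ on the central fibre. This Hodge-theoretic compatibility --- that the limit of a family of Abel-Jacobi invariants can be read off from the regulator of the specialized cycle in $CH^2(J(C'),1)_\Q$ --- is the delicate input, and is exactly where the machinery of \cite{GGKerr} enters. Once it is in place, the explicit matching with Collino's cycle is a calculation already carried out in the references.
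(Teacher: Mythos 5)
Your part (a) is essentially the paper's argument: both combine the commutativity of push-forward with specialization and the Colombo--van Geemen identity $f_*\Delta_e = 3(X_e - X_e^-)$ modulo algebraic equivalence, and your extra care about reading that identity in the family over $W$ is a reasonable (if implicit in the paper) precision. For part (b), however, you take a genuinely different route. The paper's primary proof is Colombo's explicit current-theoretic computation: the Abel--Jacobi class of the Ceresa cycle is represented by an integration current $\int_\Gamma -$ with $\partial\Gamma = C - C^-$, and by \cite[pg.~788--789]{Colombo} the specialization of this current is $\int_{\tilde\Gamma}-$ for a $3$-chain $\tilde\Gamma$ on $C'\times C'\times \C^*$; evaluating against test forms $\alpha\wedge \frac{dz}{z}$ then produces a $2$-current on $J(C')$ that is identified directly with the regulator value of Collino's cycle $(C'_p,h_p)+(C'_q,h_q)$. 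You instead set up the limit mixed Hodge structure on $H^3$ of the Jacobian family and invoke the limit Abel--Jacobi formalism of \cite[IV.D]{GGKerr}. This is sound --- indeed the paper explicitly states, both immediately after the proposition and in the Remark following Theorem \ref{degener}, that such an alternative proof exists --- but be aware of the trade-off: Colombo's current computation gives a hands-on identification of the limit with Collino's regulator without first establishing the Hodge-theoretic compatibilities (unipotent monodromy after finite base change, homological triviality of the specialized cycle so that the limit Abel--Jacobi map is well defined) that you correctly flag as the delicate input of your approach. Conversely, the Green--Griffiths--Kerr route is more systematic and adapts uniformly to the further degenerations $\Delta_{e,2}$ and $\Delta_{e,3}$ treated later in the paper.
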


\begin{proof} (a) By \cite{Colombo-van-Geemen}, $f_* \Delta_e$ is $3$ times the Ceresa cycle up to algebraic equivalence. 
Hence, ${\rm sp}(f_* \Delta_e)=g_* \Delta_{e,1}$ is a degeneration of the Ceresa cycle up to algebraic equivalence.

(b) The Abel-Jacobi invariant of Ceresa's cycle $C-C^-$, is most conveniently described as an integration current $\int_\Gamma -$, where
$\Gamma$ is a $3$-chain on $J(C)$, with $\partial \Gamma=C-C^-$. This is a current in the relative intermediate Jacobian $\cD^3(\cJ(\cC))$ (fiberwise) of the family 
$\cJ(\cC_W)\rar W$, resp. a current in
$\cD^3(\cU)$ which can be evaluated on $3$-forms with compact support. 
Colombo \cite[pg. 788-789]{Colombo} has observed that the specialization map on the level of currents is given by
${\rm sp}(\int_\Gamma -)=\int_{\tilde \Gamma} -$, where $\tilde \Gamma$ is a $3$-chain on $C' \times C' \times \C^*$. 
Choosing test forms $\alpha \wedge \frac{dz}{z}$ with compact support, this defines a $2$-current in $\cD^2(J(C'))$ 
which is the regulator value for Collino's cycle as Colombo indicates in loc. cit.. \end{proof} 

We point out that an alternative proof for (b) can be given along the lines of \cite[IV.D]{GGKerr}, 
in which the limit of Abel-Jacobi maps in families is studied. \\

We can now prove our main result (see Theorem \ref{maintheorem} in the introduction): 

\begin{theorem}\label{degener}
The degeneration $\Delta_{e,1}$ is a higher Chow cycle in $CH^2(C'\times C',1)_\Q$. It is indecomposable, 
if $C'$ is generic, and its regulator image is non-zero.
\end{theorem}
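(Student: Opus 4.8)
The plan is to assemble the three assertions out of the structural lemmas already established, so that the only genuinely new input is Collino's non-vanishing computation. First I would verify that $\Delta_{e,1}$ really does define a class in $CH^2(C'\times C',1)_\Q$. By the support lemma, $\Delta_{e,1}$ is carried by $Y_2\cup Y_3\cup Y_5$, the restriction to $Y_1=C'\times C'\times C'$ being rationally trivial (since $e$ is a Weierstrass point) and the contributions on $Y_4,Y_6,Y_7,Y_8$ being torsion. Each of $Y_2,Y_3,Y_5$ is, up to a permutation of the factors, a product $C'\times C'\times\p^1$. Restricting to $Y_2=C'\times C'\times\p^1$, the Mayer-Vietoris boundary condition of \S\ref{diagsp} together with the kernel condition of Lemma \ref{subChow} forces the cycle to restrict to zero on the two fibres over $0,\infty\in\p^1$. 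Reading $\p^1\setminus\{0,\infty\}$ as the cube $\Box^1$, this is exactly the cocycle condition in Levine's cubical description recalled in \S\ref{LevineK}; so each of the three symmetric components yields a cubical cocycle, and after identifying them via the factor-permutation isomorphisms they sum (with the signs coming from the definition of $\Delta_e$) to a single well-defined element of $CH^2(C'\times C',1)_\Q$.

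Next I would pin down the regulator. Using the proper pushforward $f\colon\cU\rar\cJ(\cC_W)$ together with its specialization, encoded in the commuting square of pushforward maps preceding Lemma \ref{subChow}, the comparison Proposition, parts (a) and (b), identifies the Abel-Jacobi/regulator class of $g_*\Delta_{e,1}$ with that of Collino's cycle $(C'_p,h_p)+(C'_q,h_q)$. Concretely, following Colombo, the specialized regulator current is $\int_{\tilde\Gamma}$ for a $3$-chain $\tilde\Gamma$ on $C'\times C'\times\C^*$, which, after pairing with test forms $\alpha\wedge\f{dz}{z}$, produces a $2$-current in $\cD^2(J(C'))$. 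Pulling this back along $f'\colon C'\times C'\rar J(C')$ gives the transcendental regulator value of $\Delta_{e,1}$ itself in $H^3_\cD(C'\times C',\Z(2))$, and Collino's evaluation shows that this is non-zero.

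Finally, for indecomposability I would use the regulator as a detector. A decomposable class lies in the image of ${\rm Pic}(C'\times C')_\Q\otimes\C^*$, and the regulator of such a class factors through the algebraic, that is N\'eron--Severi, part of $H^2(C'\times C',\C)$; equivalently its projection onto the transcendental part $H^2_{tr}$ vanishes. It therefore suffices to show that the transcendental projection of the regulator of $\Delta_{e,1}$ is non-zero, and by the previous step this reduces to Collino's statement that, for generic hyperelliptic $C'$ of genus two, the transcendental regulator of his cycle does not vanish.

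The main obstacle is precisely this last non-vanishing, which is the genuinely hard and non-formal input. It cannot be read off the cycle directly: one must exhibit a transcendental $2$-form $\alpha$ on $C'\times C'$ against which the period $\int_{\tilde\Gamma}\alpha\wedge\f{dz}{z}$ is non-zero. This is governed by an infinitesimal, variational argument over the moduli of genus-two curves, showing that the regulator period is a non-constant multivalued function whose infinitesimal invariant is non-zero, so that it can vanish only on a proper analytic subset, whence non-vanishing for generic $C'$. I would borrow this verbatim from Collino's computation, together with the limit Abel-Jacobi reformulation of Green-Griffiths-Kerr, since reproving it lies beyond the scope of the present degeneration argument.
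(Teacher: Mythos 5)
Your handling of the regulator and of indecomposability matches the paper's: both reduce to Collino's non-vanishing by pushing forward to the Jacobian, using Colombo's description of the specialized Abel--Jacobi current and the fact that the regulator of a decomposable class lives in the algebraic part of $H^2$. The problem is in your first step, where you try to realize $\Delta_{e,1}$ directly as a cubical cocycle on the components $Y_2$, $Y_3$, $Y_5$. You assert that the Mayer--Vietoris condition of \S\ref{diagsp} together with Lemma \ref{subChow} forces $\Delta_{e,1}|_{Y_2}$ to restrict to zero on the two fibres $C'\times C'\times\{0\}$ and $C'\times C'\times\{\infty\}$. Neither input delivers this. The Mayer--Vietoris sequence only produces a lift of the specialized cycle from $\cY_0$ to the normalization $V_0$; it carries no information about the restriction of that lift to the double locus $\cY_0^{[1]}$. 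Lemma \ref{subChow} is a statement about the pushforward $g_*\Delta_{e,1}$ on $J(C')\times\p^1$ (namely that it has a representative supported on the generalized Jacobian, i.e.\ on the open $\C^*$-bundle); since $g$ on $Y_2$ is $f'\times{\rm id}$ with $f'\colon C'\times C'\rar J(C')$ of degree two, vanishing of the pushforward along the boundary sections does not imply vanishing of $\Delta_{e,1}|_{Y_2}$ along the boundary fibres --- a class can be killed by $f'_*$ without being zero. (You also ask for more than is needed: the cubical cocycle condition is $i_0^*-i_\infty^*=0$, not the vanishing of both restrictions.)

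This is precisely the point the paper's proof is built to circumvent: it establishes the relative boundary condition downstairs on $J(C')\times\p^1$, where supportedness on the $\C^*$-bundle is available, interprets the resulting class in Levine's relative $K_0$-group $K_0(J(C')\times\Box^1, J(C')\times\partial\Box^1)^{(2)}_\Q\simeq CH^2(J(C'),1)_\Q$, and only then transfers it back to $C'\times C'$ through the commutative square of relative $K_0$-groups and the generically finite map $f$, obtaining a multiple of $\Delta_{e,1}$ in $K_0(C'\times C'\times\Box^1, C'\times C'\times\partial\Box^1)^{(2)}_\Q\cong CH^2(C'\times C',1)_\Q$. Your more direct route could in principle be made to work --- and would be cleaner than the $K_0$ detour --- but only after an explicit description of the specialization on $Y_2$, $Y_3$, $Y_5$ (essentially identifying it with Collino's pairs $(C'_p,h_p)+(C'_q,h_q)$ and their graphs in $C'\times C'\times\p^1$) so that the cubical boundary condition can be checked by hand; as written, that step is unsupported.
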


\begin{proof}
Replace Chow groups by (relative) $K_0$. We refer to Levine's paper \cite[\S 8, p.48-51]{Levine2}, see \S \ref{LevineK}. 
We use the isomorphism of higher Chow group with the 
relative Chow group with supports, see \S \ref{LevineK} 12). Then we note that in fact, 
$g_* \Delta_{e,1}$ is in the image of 
$$
K_0(J(C') \times \Box^1,J(C') \times \partial \Box^1)^{(2)}_\Q \rar K_0(J(C') \times \Box^1)^{(2)}_\Q.
$$
Denote the cycle in $CH^2(J(C'),1)_\Q\simeq K_0(J(C') \times \Box^1,J(C') \times \partial \Box^1)^{(2)}_\Q$ by $W$, which maps to $g_*\Delta_{e,1}$.

Then we use the commutative diagram
$$
\begin{array}{ccc}
K_0(C' \times C' \times \Box^1,C'\times C' \times \partial \Box^1)^{(2)}_\Q & \rar & K_0(C' \times C'\times \Box^1)^{(2)}_\Q.  \\
f_* \downarrow && \downarrow g_*\\
K_0(J(C') \times \Box^1,J(C') \times \partial \Box^1)^{(2)}_\Q &  \rar  & K_0(J(C') \times \Box^1)^{(2)}_\Q. \\
\end{array}
$$
Since $f$ is a finite proper morphism, the inverse image $(f_*)^{-1}(W)$ is a multiple of
$\Delta_{e,1}$, and it lies in the relative $K_0$- group of $C'\times C'$. 
Now we use the isomorphism \S \ref{LevineK} 12): 
$$
CH^2(C' \times C',1)_\Q \cong K_0(C' \times C' \times \Box^1,C'\times C' \times \partial \Box^1)^{(2)}_\Q,
$$
to deduce that $\Delta_{e,1}$ corresponds to a higher Chow cycle in $CH^2(C'\times C',1)_\Q$.

The indecomposability and non-triviality of the regulator image of $\Delta_{e,1}$ follows, because the pushford $g_*\Delta_{e,1}$ is a multiple of 
Collino's regulator indecomposable higher Chow cycle on $J(C')$, up to algebraic equivalence. \end{proof}

\begin{remark} 
Equivalently, the proof could have been done using \cite{GGKerr} and showing that the limit Abel-Jacobi map on the triple product family pushes forward to the limit 
Abel-Jacobi map of the Jacobian family. Namely, assuming after some finite base change that the family of curves has unipotent monodromy, 
and using \cite[Proposition 7.2]{GrossSchoen}, we know that the specialization is  
a numerically trivial cycle. Then by \cite{Lieberman}, we know that the specialization cycle is homologically trivial and using \cite{GGKerr}, 
the limit Abel-Jacobi map is well-defined. The pushforward is non-zero by \cite[IV D]{GGKerr}. Hence, the regulator image of $\Delta_{e,1}$ is non-zero.

One can also look at Griffiths' infinitesimal invariant of Ceresa's cycle. 
It can be computed from the cohomology class in $H^4(\cU)$ of $C-C^-$. The specialization of this
class is not homologous to zero on the closed fiber $V_0$. In Gross-Schoen \cite{GrossSchoen} this non-zero class is explained. 
It is the image of the infinitesimal invariant under specialization, and hence the infinitesimal invariant of $\Delta_{e,1}$
is non-zero. 
\end{remark}

\subsection{Second degeneration  higher Chow cycle $\Delta_{e,2}$.}

One can iterate this procedure, to obtain the following.

\begin{proposition}\label{elliptic}
The second degeneration of the cycle $\Delta_{e,1}\in CH^2(C'\times C',1)_\Q$ corresponds to an indecomposable higher Chow cycle $\Delta_{e.2}\in CH^2(E,2)_\Q$. 
Here $E$ is a generic elliptic curve and is the normalization of the degeneration, a single nodal curve, of a family of smooth hyperelliptic genus 
two curves over ${\rm Spec}(R)$, where $R$ is a DVR.
\end{proposition}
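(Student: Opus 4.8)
The plan is to iterate verbatim the mechanism used in Theorem \ref{degener}, now applied to a one-parameter degeneration of the genus two curve $C'$ rather than of the genus three curve. First I would fix a stable family $\cC'\rar B':={\rm Spec}(R')$ of hyperelliptic genus two curves, with smooth generic fibre isomorphic to $C'$ and special fibre a single nodal curve whose normalization is the elliptic curve $E$, arranged so that the two branches of the node sit over Weierstrass points of $E$ (exactly the hypothesis that made the $\C^*$-extension split in the first step). Passing to the self-product family $\cC'\times_{B'}\cC'$ and a good model $\pi':\cY'\rar B'$, the higher Chow cycle $\Delta_{e,1}\in CH^2(C'\times C',1)_\Q$ extends naively by taking closures over $B'$, giving a class on the smooth locus $\cU'=\cY'-\cY'_0$.

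Next I would apply the specialization map of Proposition \ref{specialization} to $\Delta_{e,1}$, viewed through Levine's relative $K_0$ dictionary of \S \ref{LevineK}: because $CH^2(C'\times C',1)_\Q$ is represented by $K_0(C'\times C'\times \Box^1;\, C'\times C'\times \partial\Box^1)^{(2)}_\Q$, the cycle $\Delta_{e,1}$ is literally a $K_0$-class, and can be specialized by the same formula $\sp(W)=\partial(W\cdot[\pi'])$ used for ordinary cycles. The Mayer-Vietoris sequence of Proposition \ref{MayerVietoris} then lifts the resulting class to the normalization $V'_0$ of the special fibre, and I would define $\Delta_{e,2}$ to be this specialization.

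I would then push forward along the Jacobian family. The abelian surface $J(C')$ degenerates to the compactified generalized Jacobian of the nodal curve, a $\p^1$-bundle over $J(E)=E$; since the node lies over Weierstrass points, the same ${\rm Ext}^1$ computation as in the lemma following \S \ref{diagsp} shows the $\C^*$-extension is trivial, yielding a degree two map to $E\times \p^1$. The pushforward $g_*\Delta_{e,1}$ already lies in $CH^2(J(C'),1)_\Q$, and its specialization is supported on the open part $E\times \C^*$ of $E\times \p^1$; by the relative $K_0$ dictionary this extra $\C^*$ (an additional $\Box^1$ factor) raises the simplicial degree from $1$ to $2$, so that the class is a higher Chow cycle in $CH^2(E,2)_\Q$. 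As in Theorem \ref{degener}, the commutative square of relative $K_0$-groups with the finite proper maps $f_*,g_*$ transfers this back to $\Delta_{e,2}\in CH^2(E,2)_\Q$.

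Finally, indecomposability would follow exactly as before: the pushforward of $\Delta_{e,2}$ onto $E$ is a nonzero rational multiple of Collino's degenerated cycle in $CH^2(E,2)_\Q$, whose indecomposability and nonvanishing regulator are established in \cite[Remark 7.13]{Collino} and made precise in \cite[IV D]{GGKerr}; finiteness of $f_*$ then forces $\Delta_{e,2}$ itself to be indecomposable. The main obstacle I anticipate is not the formal bookkeeping but the identification of the surviving component: one must check that after specialization the only nontorsion contribution comes from the component carrying $E\times \C^*$, that the diagonal contribution on $E\times E$ again dies because $e$ is a Weierstrass point (the analogue of the Gross-Schoen torsion statement \cite[Proposition 4.8]{GrossSchoen} in this lower genus), and that the iterated specialization genuinely commutes with pushforward to the Jacobian up to rational equivalence, so that the Adams weight $(2)$ and the simplicial degree $2$ are correctly matched throughout.
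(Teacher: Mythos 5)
Your proposal follows essentially the same route as the paper: the paper's proof of Proposition \ref{elliptic} likewise takes a family of genus two curves degenerating to a nodal curve with elliptic normalization (node over Weierstrass points), a good model of the double product, the specialization map, the identification of the cycle inside $Z^2_\partial(\cJ(\cC')\times\p^1)_\Q$ up to a finite map, Levine's relative $K_0$-groups to pass from $CH^2(E\times\p^1,1)_\Q$ to $CH^2(E,2)_\Q$, and the identification with a multiple of Collino's degenerated cycle to conclude indecomposability. Your write-up is in fact more detailed than the paper's own (very terse) argument, but it is the same iteration of Theorem \ref{degener}.
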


\begin{proof}
The proof is similar to the first degeneration $\Delta_{e,1}$. 
We consider a smooth family of genus two curves $\cC'\rar B-\{0\}$, $B:={\rm Spec}(R)$, where $R$ is a DVR. The special fibre is a single nodal curve whose normalization 
is an elliptic curve $E$, with the inverse image of the node being two Weierstrass points.  A good model $\cU$ of the double product smooth  family is considered as earlier, 
together with the higher Chow cycle in $CH^2(\cC'\times \cC',1)_\Q$. This is same as a cycle in 
$ Z^2_\partial(\cJ(\cC') \times \p^1)_\Q$, see Lemma \ref{subChow} (upto a finite map, as in the previous situation).   Using the specialization map $sp$, we obtain the degeneration $\Delta_{e,2}\in CH^2(E\times \p^1,1)_\Q$. 
In fact, using the relative $K_0$-groups of Levine (see proof Theorem \ref{degener}), it corresponds to a higher Chow cycle in $CH^2(E,2)_\Q$. This is a multiple of  
Collino's degenerated cycle, up to algebraic equivalence. Hence, it is indecomposable with non-zero regulator image.   
\end{proof}  

\subsection{Third degeneration higher Chow cycle $\Delta_{e,3}$.}

We can even further degenerate $\Delta_{e,2}$ to a cycle $\Delta_{e,3}$ in $CH^2(\C,3)\otimes \Q$ by the same method. 
This is related to Collino's work \cite[Remark 7.13]{Collino}. There also a formula for the regulator as indicated.


\end{document}